\UseRawInputEncoding
\documentclass[12pt, reqno]{amsart}
\usepackage[margin=1in]{geometry}
\usepackage{amssymb,latexsym,amsmath,amscd,amsfonts}
\usepackage{latexsym}
\usepackage[mathscr]{eucal}
\usepackage{bm}
\usepackage{mathptmx}

\def\textmatrix#1&#2\\#3&#4\\{\bigl({#1 \atop #3}\ {#2 \atop #4}\bigr)}
\def\dispmatrix#1&#2\\#3&#4\\{\left({#1 \atop #3}\ {#2 \atop #4}\right)}
\newcommand{\beg}{\begin{equation}}
	\newcommand{\eeg}{\end{equation}}
\newcommand{\ben}{\begin{eqnarray*}}
	\newcommand{\een}{\end{eqnarray*}}

\newtheorem{thm}{Theorem}[section]
\newtheorem{cor}[thm]{Corollary}
\newtheorem{lem}[thm]{Lemma}

\newtheorem{prop}[thm]{Proposition}
\numberwithin{equation}{section} 
\theoremstyle{definition}
\newtheorem{defn}[thm]{Definition}

\newcommand{\HS}{\mathcal H}
\newcommand{\C}{\mathbb{C}}
\newcommand{\Ccc}{\mathbb{C}^3}

\newcommand{\D}{\mathbb{D}}
\newcommand{\T}{\mathbb{T}}
\newcommand{\ft}{\mathcal F_O}

\newcommand{\E}{\mathbb E}

\newcommand{\ov}{\overline}

\newcommand{\la}{\langle}
\newcommand{\ra}{\rangle}

\begin{document}
	\title[Triangular $\E$-contraction, factorization of contractions and subvarieties]
	{Triangular Tetrablock-contractions, factorization of contractions, dilation and subvarieties}
	
	\author[Sourav Pal]{Sourav Pal}
	\address[Sourav Pal]{Mathematics Department, Indian Institute of Technology Bombay, Powai, Mumbai - 400076, India.}
	
	\email{souravmaths@gmail.com , sourav@math.iitb.ac.in}

	\keywords{ Triangular tetrablock-contractions, Dilation, Berger-Coburn-Lebow Model Theorem, Factorization, Subvariety }
	
	\subjclass[2010]{47A15, 47A20, 47A25, 47A45, 47A56, 47A65}
	
	\thanks{The author is supported by the Seed Grant of IIT Bombay, the CPDA of the Govt. of India and the MATRICS Award of SERB, (Award No. MTR/2019/001010) of DST, India.}

\begin{abstract}

A commuting triple of Hilbert space operators $(A,B,P)$, for which the closed tetrablock $\overline{\mathbb E}$ is a spectral set, is called a \textit{tetrablock-contraction} or simply an $\mathbb E$-\textit{contraction}, where
\[
\mathbb E=\{(a_{11},a_{22}, \det A):\, A=[a_{ij}]\in \mathcal M_2(\mathbb C), \; \|A\| <1  \} \subset \mathbb C^3
\]
is a polynomially convex domain which is naturally associated with the $\mu$-synthesis problem. We introduce triangular $\E$-contractions and prove that every pure triangular $\E$-contraction dilates to a pure triangular $\E$-isometry. We construct a functional model for a pure triangular $\mathbb E$-isometry and apply that model to find a new proof for the famous Berger-Coburn-Lebow Model Theorem for commuting isometries. Next we give an alternative proof to the more generalized version of Berger-Coburn-Lebow Model, namely the factorization of a pure contraction due to Das, Sarkar and Sarkar (\textit{Adv. Math.} 322 (2017), 186 -- 200). We find a necessary and sufficient condition for the existence of $\mathbb E$-unitary dilation of an $\mathbb E$-contraction $(A,B,P)$ on the smallest dilation space and show that it is equivalent to the existence of a distinguished variety in $\mathbb E$ when the defect space $D_{P^*}$ is finite dimensional. 

\end{abstract}

\maketitle


\section{INTRODUCTION}

\vspace{0.2cm}
	
\noindent Throughout the paper, all operators are bounded linear operators acting on complex Hilbert spaces. A contraction is an operator with norm not greater than $1$. We denote by $\C, \D, \T$ the complex plane, the unit disk and the unit circle in the complex plane respectively with center at the origin. 

In this paper, we introduce and develop a theory based on a new class of operators associated with the tetrablock, namely the \textit{triangular tetrablock contractions}. The tetrablock $\mathbb E$, defined by
\[
\mathbb E=\{(a_{11},a_{22}, \det A):\, A=[a_{ij}]\in \mathcal M_2(\mathbb C), \; \|A\| <1  \} \subset \mathbb C^3,
\]
arises naturally in the $\mu$-synthesis problem. The $\mu$-synthesis is a part of the theory of robust control of
systems comprising interconnected electronic devices whose outputs
are linearly dependent on the inputs. Given a linear subspace $E$
of $\mathcal M_n(\mathbb C)$, the space of all $n \times n$
complex matrices, the functional
\[
\mu_E(A):= (\text{inf} \{ \|X \|: X\in E \text{ and } (I-AX)
\text{ is singular } \})^{-1}, \; A\in \mathcal M_n(\mathbb C),
\]
is called a \textit{a structured singular value}, where the linear
subspace $E$ is referred to as the \textit{structure}. If
$E=\mathcal M_n(\mathbb C)$, then $\mu_E (A)$ is equal to the
operator norm $\|A\|$, while if $E$ is the space of all scalar multiples of the identity matrix, then $\mu_E(A)$
is the spectral radius $r(A)$. For any linear subspace $E$ of
$\mathcal M_n(\mathbb C)$ that contains the identity matrix $I$,
$r(A)\leq \mu_E(A) \leq \|A\|$. For a detailed discussion on $\mu$-synthesis, an interested reader is referred to the pioneering
work of Doyle \cite{Doyle}. The aim of $\mu$-synthesis is to find an
analytic function $f$ from the open unit disk $\mathbb D$
to $\mathcal M_n(\mathbb C)$ subject to a finite
number of interpolation conditions such that $\mu_E(f(\lambda))<1$
for all $\lambda \in \mathbb D$. If $E$ is the linear subspace of
$2 \times 2$ diagonal matrices, then for any $A=(a_{ij}) \in
\mathcal M_2 (\mathbb C)$, $\mu_E (A)<1$ if and only if
$(a_{11},a_{22}, \det A)\in \mathbb E$ (see \cite{A:W:Y}, Section-9). In spite of having origin in the $\mu$-synthesis problem, the geometry and function theory of the tetrablock was first studied by Nicholas Young and his fellow collaborators, \cite{A:W:Y, young}. Later this domain has been extensively studied over past fifteen years by a quite a few mathematicians from both complex analytic and operator theoretic perspectives, see \cite{EZ, EKZ, Zwo, T:B, S:P-tetra1, S:P-tetra2, S:P-tetra3, Bh-Sau1, Ball-Sau} and the references therein.

\begin{defn}
A triple of commuting Hilbert space operators $(A,B,P)$ is called a \textit{tetrablock-contraction} or simply an $\E$-\textit{contraction} if the closed tetrablock $\ov{\E}$ is a spectral set for $(A,B,P)$, that is, the Taylor joint spectrum $\sigma_T(A,B,P) \subseteq \ov{\E}$ and von Neumann inequality
\[
\|f(A,B,P)\|\leq \sup_{z\in\ov{\E}} |f(z)|=\|f\|_{\infty,\; \ov{\E}} 
\]
holds for every rational function $f=\dfrac{p}{q}$ with $p,q \in \C[z_1,z_2,z_3]$, such that $q$ does not have any zero inside $\ov{\E}$. Here $f(A,B,P)=p(A,B,P)q(A,B,P)^{-1}$. Moreover, an $\E$-contraction $(A,B,P)$ is called \textit{pure} or $C._0$ if $P$ is a $C._0$ contraction that is ${P^*}^n \rightarrow 0$ strongly as $n \rightarrow \infty$.
\end{defn}

A point of the form $(z_1,z_2,z_1z_2)$, where $z_1,z_2$ are from the unit disk $\D$, always belongs to the tetrablock and is called a \textit{triangular point}, (see \cite{A:W:Y}). A straight-forward analogue of this result is that $(P,Q,PQ)$ is an $\E$-contraction, whence $P,Q$ are commuting contractions (see Lemma \ref{lem:triangular1}). Such an $\E$-contraction is called a \textit{triangular} $\E$-\textit{contraction}. We recall from literature special classes of $\E$-contractions, e.g. $\E$-unitary, $\E$-isometry etc. in Section \ref{sec:2}.

In Theorem \ref{thm:model-triang}, we construct a functional model for a pure triangular $\E$-isometry and by an application of that model we find in Theorem \ref{thm:B-C-Lb} a new proof for the following Berger-Coburn-Lebow model theorem for commuting isometries, which provides factorization of a pure isometry.

\begin{thm} [Berger-Coburn-Lebow, \cite{B-C-Lb}]
 
Let $V_1, \dots , V_n$ be commuting isometries on $\HS$ such that $V=\prod_{i=1}^n V_i$ is a pure isometry. Then, there exist projections $P_1, \dots , P_n$ and unitaries $U_1, \dots , U_n$ in $\mathcal B(\mathcal D_{V^*})$ such that
\[
(V_1, \dots , V_n , V) \equiv (T_{P_1^{\perp}U_1+ zP_1U_1 }, \dots , T_{P_n^{\perp}U_n+zP_nU_n }, T_z)  \;\; \text{ on } \; \; H^2(\mathcal D_{V^*}).
\] 

\end{thm}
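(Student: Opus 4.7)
The plan is to deduce the required factorization by applying the functional model for pure triangular $\E$-isometries (Theorem \ref{thm:model-triang}) to $n$ triples built out of the $V_i$'s. For each $i\in\{1,\dots,n\}$ I set $W_i:=\prod_{j\neq i}V_j$. Since the $V_j$'s pairwise commute and are isometries, $W_i$ is an isometry that commutes with $V_i$ and satisfies $V_iW_i=V$. Lemma \ref{lem:triangular1} guarantees that $(V_i,W_i,V)$ is then a triangular $\E$-contraction, and because all three entries are isometries and $V$ is pure, it is in fact a pure triangular $\E$-isometry.

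I would next feed each such triple into Theorem \ref{thm:model-triang}. The theorem furnishes a unitary identification of the ambient space $\HS$ with $H^2(\mathcal D_{V^*})$ under which the third entry becomes $T_z$ and the first two become multiplication operators with affine-in-$z$ operator-valued symbols. The decisive point is that this identification is nothing other than the canonical Sz.-Nagy--Foias model for the pure isometry $V$; hence, modulo a single global unitary gauge on $\mathcal D_{V^*}$, the same identification works for every $i$. Consequently one obtains all $n$ multipliers on a single common model space with $V\equiv T_z$.

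Denote by $\Phi_i(z)=A_i+zB_i$ the symbol of $V_i$ in this model. That $V_i$ is an isometry translates to $\Phi_i$ being inner, equivalently
\begin{equation*}
A_i^*A_i+B_i^*B_i=I_{\mathcal D_{V^*}},\qquad A_i^*B_i=0.
\end{equation*}
From these relations a short algebraic argument produces a projection $P_i$ and a unitary $U_i$ on $\mathcal D_{V^*}$ with $A_i=P_i^\perp U_i$ and $B_i=P_iU_i$, giving exactly the asserted form $\Phi_i=P_i^\perp U_i+zP_iU_i$. The relation $V_iW_i=V$ becomes the polynomial identity $\Phi_i(z)\Psi_i(z)=zI$ on symbols, which is the internal consistency check that $\{P_i,U_i\}_{i=1}^n$ actually combine to give $T_z$.

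The step I anticipate as the main obstacle is ensuring the simultaneous compatibility of the unitaries produced by Theorem \ref{thm:model-triang} across the $n$ different triples: one needs a single unitary $\HS\to H^2(\mathcal D_{V^*})$ that witnesses all $n$ factorizations at once, not $n$ a priori unrelated ones. The canonicity of the Sz.-Nagy--Foias model for the pure isometry $V$ sitting as the third coordinate of every triple dispenses with this worry, at the cost only of a global unitary on $\mathcal D_{V^*}$ that can be absorbed into the $U_i$'s. The remaining task, namely recognising an affine inner $B(\mathcal D_{V^*})$-valued function as $P^\perp U+zPU$, is routine operator-valued $H^\infty$-analysis.
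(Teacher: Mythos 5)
Your proposal follows essentially the same route as the paper's proof: form the triples $(V_i,\prod_{j\neq i}V_j,V)$, observe each is a pure triangular $\E$-isometry, note that the model unitary is determined by the third coordinate $V$ alone (so a single unitary $\HS\to H^2(\mathcal D_{V^*})$ serves all $n$ triples simultaneously), and then read off the symbols from Theorem \ref{thm:model-triang}. The paper justifies the common-unitary step by the uniqueness of $U$ in Theorem \ref{modelthm1} subject to $\hat T_3=U^*T_zU$, which is the same canonicity of the Nagy--Foias model that you invoke.

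One caveat on your final step: the two relations $A_i^*A_i+B_i^*B_i=I$ and $A_i^*B_i=0$ (innerness, i.e.\ $V_i$ being an isometry) do \emph{not} by themselves yield a factorization $A_i=P_i^\perp U_i$, $B_i=P_iU_i$ with $U_i$ unitary --- take $A_i$ a unilateral shift and $B_i=0$ for a counterexample on infinite-dimensional $\mathcal D_{V^*}$. You also need the co-isometry relations $A_iA_i^*+B_iB_i^*=I$ and $B_iA_i^*=0$, i.e.\ that $A_i+zB_i$ is unitary-valued on $\T$; these come precisely from the product identity $\Phi_i(z)\Psi_i(z)=\Psi_i(z)\Phi_i(z)=zI$, which you dismiss as a mere ``internal consistency check'' but which is in fact the essential input. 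Since Theorem \ref{thm:model-triang} (whose proof extracts exactly these relations from $V_iW_i=W_iV_i=V$) already delivers the symbols in the form $Q^\perp W+zQW$, the cleanest repair is simply to quote its conclusion rather than re-derive it from innerness alone.
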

In \cite{B-D-F1}, Bercovici, Douglas and Foias presented an independent proof of the Berger-Coburn-Lebow model theorem with a special emphasis on the case when $Ker\, \mathcal D_{V^*}$ is finite dimensional. Indeed, they gave a complete classification
of $n$-tuples for which $V$ is a pure isometry of multiplicity $n$. Recently Das, Sarkar and Sarkar generalized the factorization of Berger-Coburn-Lebow to the class of pure contractions (i.e. $C._0$ contractions) in the following way. 

\begin{thm} [Das-Sarkar-Sarkar, \cite{D:S:S}] \label{thm:factor}
Let $T$ be a pure contraction on a Hilbert space $\HS$. Then $T=T_1T_2$ for a pair of commuting contractions $T_1,T_2 \in \mathcal B(\HS)$ if and only if there exist a Hilbert space $E$, a projection $P$ and a unitary $U$ in $\mathcal B(E)$ such that $\HS \subseteq H^2(E)$ is a common invariant subspace for $(M_{\Phi}^*,M_{\Psi}^*)$ and the following identities hold:
\begin{itemize}

\item[(i)] $P_{\HS} M_{\Phi \Psi}|_{\HS}= P_{\HS} M_{\Psi \Phi}|_{\HS} = P_{\HS} M_{z}|_{\HS}$

\item[(ii)] $(T_1,T_2) \equiv (P_{\HS} M_{\Phi}|_{\HS}, P_{\HS} M_{\Psi}|_{\HS})$,

\end{itemize}
where $\Phi(z)=(P+zP^{\perp})U^*$ and $\Psi(z)=U(P^{\perp}+zP)$.

\end{thm}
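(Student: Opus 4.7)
The plan is to recognize that a factorization $T = T_1 T_2$ of a pure contraction $T$ as a product of commuting contractions $T_1, T_2$ is precisely the data of a pure triangular $\E$-contraction $(T_1, T_2, T)$, and then to appeal to the dilation and model results for such triples developed earlier in the paper.

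For the sufficiency direction, the strategy is a direct computation. With $\Phi(z) = (P + zP^\perp)U^*$ and $\Psi(z) = U(P^\perp + zP)$, one checks that $\Phi(z)\Psi(z) = (P + zP^\perp)(P^\perp + zP) = zI_E$ and $\Psi(z)\Phi(z) = U(P^\perp + zP)(P + zP^\perp)U^* = zI_E$, so the multipliers $M_\Phi$ and $M_\Psi$ commute on $H^2(E)$ and satisfy $M_\Phi M_\Psi = M_\Psi M_\Phi = M_z$. Since $\Phi(e^{i\theta})$ and $\Psi(e^{i\theta})$ are unitary on $\T$, the multipliers are isometries, and $(M_\Phi, M_\Psi, M_z)$ is a pure triangular $\E$-isometry. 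Combining the $(M_\Phi^*, M_\Psi^*)$-invariance of $\HS$ with conditions (i) and (ii) then yields $T_1 T_2 = P_\HS M_\Phi|_\HS P_\HS M_\Psi|_\HS = P_\HS M_\Phi M_\Psi|_\HS = P_\HS M_z|_\HS = T$, where the co-invariance is used to pass from the composition of compressions to the compression of the composition; commutativity of $T_1$ and $T_2$ is inherited from that of $M_\Phi$ and $M_\Psi$ in the same manner.

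For the necessity direction, given a factorization $T = T_1 T_2$ with $T_1, T_2$ commuting contractions and $T$ a $C._0$ contraction, the triple $(T_1, T_2, T)$ is a pure triangular $\E$-contraction. I would first invoke the dilation theorem established earlier in the paper to extend $(T_1, T_2, T)$ to a pure triangular $\E$-isometry $(V_1, V_2, V)$ on a space $\mathcal K \supseteq \HS$, with $\HS$ co-invariant and the compression of the dilation to $\HS$ recovering $(T_1, T_2, T)$. Next, I would apply Theorem \ref{thm:model-triang} (the functional model for pure triangular $\E$-isometries) to identify $(V_1, V_2, V)$ unitarily with $(M_\Phi, M_\Psi, M_z)$ on some $H^2(E)$, with $\Phi, \Psi$ in the prescribed form involving a projection $P$ and a unitary $U$. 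Restricting this identification to $\HS$ immediately yields (i), since $V_1 V_2 = V$ translates to $M_\Phi M_\Psi = M_z$ and compresses to $\HS$, and yields (ii) from the definition of the compression.

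The main obstacle is ensuring that the model in Theorem \ref{thm:model-triang} produces the multipliers in precisely the Berger--Coburn--Lebow normal form $\Phi(z) = (P + zP^\perp)U^*$ and $\Psi(z) = U(P^\perp + zP)$, rather than in some equivalent but differently parameterized shape; reading off the correct projection $P$ and unitary $U$ from the model is the key technical step. A secondary delicacy is to verify that the coefficient space $E$ can be taken to match the defect space $\mathcal D_{T^*}$, so that the inclusion $\HS \subseteq H^2(E)$ coincides with the standard Sz.-Nagy--Foias embedding and the co-invariance inherited from the dilation step lands $\HS$ inside $H^2(E)$ in the desired way.
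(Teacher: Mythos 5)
Your proposal follows essentially the same route as the paper: recognize $(T_1,T_2,T)$ as a pure triangular $\E$-contraction, dilate it to a pure triangular $\E$-isometry via Theorem \ref{thm:triang-dilation}, and read off the Berger--Coburn--Lebow normal form from the functional model of Theorem \ref{thm:model-triang} (which already produces exactly $\Phi(z)=(P+zP^{\perp})U^{*}$, $\Psi(z)=U(P^{\perp}+zP)$ after relabeling $W=U^{*}$, $Q=P^{\perp}$, so the ``obstacle'' you flag is already resolved there), with the converse being the same direct computation the paper dismisses as trivial. The only minor point is that the statement asks merely for \emph{some} coefficient space $E$, so your secondary concern about matching $E$ with $\mathcal D_{T^{*}}$ is not needed.
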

In Theorem \ref{thm:triang-dilation}, we prove that every pure triangular $\E$-contraction dilates to a pure triangular $\E$-isometry. As a corollary of this dilation theorem we obtain an alternative proof for the above factorization theorem. Also, one can find some notable works due to Sau in this direction in \cite{Sau}.

Section \ref{sec:4} is devoted to showing an interplay between dilation of an $\E$-contraction and distinguished varieties in the tetrablock. A \textit{distinguished variety} in $\E$ is the intersection of an algebraic variety $W$ with $\E$ such that $W$ exits the tetrablock through its distinguished boundary without intersecting any other parts of its topological boundary. In Theorem \ref{thm:E-unitary dilation}, we provide a necessary and sufficient condition under which any $\E$-contraction $(A,B,P)$ admits a minimal $\E$-unitary dilation on the smallest dilation space. Then we show in Theorems \ref{thm:dilation-variety1} \& \ref{thm:dilation-variety2} that the existence of such a minimal $\E$-unitary dilation is equivalent to the existence of a distinguished variety in $\E$ when $\mathcal D_{P^*}$ is finite dimensional. This indeed gives a new characterization for the distinguished varieties in the tetrablock. Section \ref{sec:2} deals with a few results from the literature.

\vspace{0.2cm}

	\section{DEFINITIONS, TERMINOLOGIES AND A BRIEF LITERATURE} \label{sec:2}
	
	\vspace{0.2cm}
	
\noindent  Here we recollect a few results from the literature  and these results will be used frequently. We begin with the following scalar characterization theorem.

\begin{thm}[\cite{A:W:Y}, Theorems 2.2 \& 2.4]\label{thm:21}
Let $(x_1,x_2,x_3)\in \Ccc$. Then the following are equivalent.

\begin{enumerate}

\item $(x_1,x_2,x_3) \in \E \quad (\text{respectively, } \in \overline{\mathbb E})$ ;

\item $|x_1-\bar x_2 x_3|+|x_1x_2-x_3|< 1-|x_2|^2 \;$ (respectively, $\leq 1-|x_2|^2 $ and if $x_1x_2=x_3$ then, in addition $|x_1|\leq 1)$ ;
 
 \item $|x_2-\bar x_1 x_3|+|x_1x_2-x_3|< 1-|x_1|^2 \;$ (respectively, $\leq 1-|x_1|^2 $ and if $x_1x_2=x_3$ then, in addition $|x_2|\leq 1)$ ;
 
 \end{enumerate}	

\end{thm}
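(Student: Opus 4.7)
The plan is to translate the matricial definition of $\E$ into a scalar inequality via a Parrott-type completion argument, and then cast the result into the forms (ii) and (iii) by means of a key algebraic identity.

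First I would observe that $(x_1,x_2,x_3) \in \E$ if and only if there exist $c,d \in \C$ with $cd = x_1 x_2 - x_3$ such that
\[
A \;=\; \begin{pmatrix} x_1 & c \\ d & x_2 \end{pmatrix}
\]
is a strict contraction. The singular values of $A$ satisfy $\sigma_1 \sigma_2 = |\det A| = |x_3|$ and $\sigma_1^2 + \sigma_2^2 = \mathrm{tr}(A^*A) = |x_1|^2 + |x_2|^2 + |c|^2 + |d|^2$. To minimize $\sigma_1 = \|A\|$ over the free parameters $c,d$ with fixed product, it suffices to minimize $|c|^2 + |d|^2$ subject to $|cd| = |x_1 x_2 - x_3|$; by AM--GM the minimum $2|x_1 x_2 - x_3|$ is attained at $|c| = |d| = |x_1 x_2 - x_3|^{1/2}$. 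Inspection of the larger root of $t^2 - \mathrm{tr}(A^*A) t + |x_3|^2 = 0$ then shows that $\|A\|<1$ is achievable for some admissible $(c,d)$ if and only if
\[
|x_3| < 1 \qquad \text{and} \qquad |x_1|^2 + |x_2|^2 + 2|x_1 x_2 - x_3| \;<\; 1 + |x_3|^2.
\]

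Second, to recast this as (ii), I would invoke the algebraic identity
\[
|x_1 - \bar x_2 x_3|^2 - |x_1 x_2 - x_3|^2 \;=\; (|x_1|^2 - |x_3|^2)(1 - |x_2|^2),
\]
verified by direct expansion of both sides. Setting $p = |x_1 - \bar x_2 x_3|$, $q = |x_1 x_2 - x_3|$, and $u = 1 - |x_2|^2$, the scalar inequality from the first step rearranges, via the identity, to $(u-q)^2 > p^2$; this factors as $p + q < u$ or $q - p > u$. The second branch is ruled out by combining $|x_3| < 1$ with the reverse triangle-inequality lower bound $q \geq \bigl| |x_3| - |x_1||x_2| \bigr|$: a short computation reduces the assumption of the second branch to $(|x_1| - |x_2|)(1 - |x_2|) > 0$, which contradicts the inequality $|x_1| < |x_2|$ also forced by the derived scalar condition. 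Only $p + q < u$ survives, giving (ii). Assertion (iii) then follows from the symmetry $A \leftrightarrow A^T$, which preserves $\det A$ while interchanging $x_1$ and $x_2$.

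Finally, the closure $\overline\E$ is handled by taking limits of sequences from $\E$: continuity of each side of the inequalities automatically yields the nonstrict versions. The exceptional clause ``if $x_1 x_2 = x_3$ then in addition $|x_1| \leq 1$'' arises at the degenerate boundary points where $|x_2| = 1$ and $x_3 = x_1 x_2$: there both $p$ and $q$ vanish and the inequality $p + q \leq u$ collapses to $0 \leq 0$; an independent argument using the upper-triangular completion $c = 0$ (for which $\|A\| \leq 1$ is equivalent to $|x_1|, |x_2| \leq 1$) supplies the missing bound on $|x_1|$. The main obstacle I anticipate is the rigorous exclusion of the spurious branch $q - p > u$ in the algebraic reduction; this cannot be obtained from the squared inequality alone and demands that the strict bound $|x_3| < 1$ be used in tandem with the reverse triangle-inequality lower bound on $q$.
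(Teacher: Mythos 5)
This is a quoted result (Theorems 2.2 and 2.4 of \cite{A:W:Y}); the paper offers no proof of it, so the only meaningful comparison is with the original source, whose strategy your proposal essentially reproduces: reduce membership in $\E$ to the scalar condition $|x_3|<1$ and $|x_1|^2+|x_2|^2+2|x_1x_2-x_3|<1+|x_3|^2$ by optimizing over off-diagonal completions, then convert via the identity $|x_1-\bar x_2x_3|^2-|x_1x_2-x_3|^2=(|x_1|^2-|x_3|^2)(1-|x_2|^2)$. That outline is sound, and your first step and the identity both check out. However, two points need repair. First, the branch exclusion as written is not a proof: the scalar condition is symmetric in $x_1,x_2$, so it does not by itself force $|x_1|<|x_2|$ (that inequality only follows after the branch hypothesis $q-p>u$ is also invoked), and the claimed reduction of the branch hypothesis to $(|x_1|-|x_2|)(1-|x_2|)>0$ does not follow from the reverse triangle inequality $q\ge\bigl||x_3|-|x_1||x_2|\bigr|$ as stated. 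The clean exclusion uses the companion identity $x_3(1-|x_2|^2)=x_2(x_1-\bar x_2x_3)-(x_1x_2-x_3)$, which gives $q\le|x_2|p+|x_3|u<p+u$ once one knows $|x_2|<1$ and $|x_3|<1$, directly contradicting $q>p+u$. (You also need $|x_2|<1$, i.e.\ $u>0$, before dividing by $u$ to reach $(u-q)^2>p^2$; this follows from $|x_2|=|a_{22}|\le\|A\|<1$ but should be said.)

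Second, and more seriously, your treatment of $\overline{\mathbb E}$ proves only half of Theorem 2.4. Taking limits of points of $\E$ shows that every point of $\overline{\mathbb E}$ satisfies the non-strict inequality (and, via $|a_{11}|\le\|A\|\le1$, the exceptional clause), i.e.\ necessity. The substantive direction is sufficiency: that a triple satisfying $|x_1-\bar x_2x_3|+|x_1x_2-x_3|\le1-|x_2|^2$, together with $|x_1|\le1$ when $x_1x_2=x_3$, actually lies in the closure of $\E$. This requires exhibiting approximating points of $\E$ (for instance by analyzing when $(rx_1,rx_2,r^2x_3)$ satisfies the strict inequality for $r<1$) or a direct completion argument with $\|A\|\le1$, and it is precisely at the degenerate locus $|x_2|=1$, $x_3=x_1x_2$ that the inequality degenerates to $0\le0$ and the extra clause carries all the information. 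Your proposal identifies where the clause is needed but supplies no argument for this converse implication, so as it stands the characterization of $\overline{\mathbb E}$ is not established.
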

The distinguished boundary of the tetrablock was determined in \cite{A:W:Y} to be the following set:
\[
b\ov{\E}=\{(x_1,x_2,x_3)\in \ov{\E}\,:\, |x_3|=1 \}.
\]
Operator theory on the tetrablock was introduced in \cite{T:B} and was further developed in \cite{B-P1}, \cite{S:P-tetra2}, \cite{S:P-tetra3} and in a few more articles. Unitaries, isometries, co-isometries etc. are special classes of contractions. There are natural analogues of these classes for $\E$-contractions in the literature (see \cite{T:B}).

\begin{defn}
Let $A,B,P$ be commuting operators on $\mathcal H$. Then $(A,B,P)$
is called
\begin{itemize}
\item[(i)] an $\mathbb E$-\textit{unitary} if $A,B,P$ are normal
operators and the Taylor joint spectrum $\sigma_T (A,B,P)$ is a
subset of $b\mathbb{E}$ ;

\item[(ii)] an $\mathbb E$-isometry if there exists a Hilbert space
$\mathcal K \supseteq \mathcal H$ and an $\mathbb E$-unitary
$(Q_1,Q_2,V)$ on $\mathcal K$ such that $\mathcal H$ is a joint
invariant subspace of $A,B,P$ and that $(Q_1|_{\mathcal H},
Q_2|_{\mathcal H},V|_{\mathcal H})=(A,B,P)$ ;

\item[(iii)] an $\mathbb E$-co-isometry if the adjoint $(A^*,
B^*,P^*)$ is an $\mathbb E$-isometry.
\end{itemize}

\end{defn}
The following theorem provides a set of independent descriptions of the $\E$-unitaries.

\begin{thm}[{\cite{T:B}, Theorem 5.4}]\label{thm:tu}
    Let $\underline N = (N_1, N_2, N_3)$ be a commuting triple of
    bounded operators. Then the following are equivalent.

    \begin{enumerate}

        \item $\underline N$ is an $\mathbb E$-unitary,

        \item $N_3$ is a unitary and $\underline N$ is an $\mathbb
        E$-contraction,

        \item $N_3$ is a unitary, $N_2$ is a contraction and $N_1 = N_2^*
        N_3$.
    \end{enumerate}
\end{thm}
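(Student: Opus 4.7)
The plan is to prove the three conditions equivalent cyclically, $(1) \Rightarrow (2) \Rightarrow (3) \Rightarrow (1)$, with the middle step carrying the nontrivial content.

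For $(1) \Rightarrow (2)$: since $b\ov{\E} \subseteq \{|x_3|=1\}$, the projection property of the Taylor joint spectrum applied to $\sigma_T(\underline N) \subseteq b\ov{\E}$ forces $\sigma(N_3) \subseteq \T$, and a normal operator with unimodular spectrum is unitary. The rational von Neumann inequality on $\ov{\E}$ then follows at once from the spectral theorem for the commuting normal triple, since $\|f(N_1,N_2,N_3)\| = \sup\{|f(z)|: z \in \sigma_T(\underline N)\} \leq \sup\{|f(z)|: z \in \ov{\E}\}$ for every rational $f$ without poles on $\ov{\E}$.

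For $(3) \Rightarrow (1)$: substituting $N_1 = N_2^*N_3$ into $N_1N_2 = N_2N_1$ and using $N_2N_3 = N_3N_2$ together with the invertibility of $N_3$ gives $N_2^*N_2 = N_2N_2^*$, so $N_2$ is normal; the same manoeuvre applied to $N_1N_3 = N_3N_1$ shows that $N_2^*$ commutes with $N_3$. Hence $N_1 = N_2^*N_3$ is a product of commuting normals, and is itself normal. The Taylor joint spectrum then coincides with the Gelfand spectrum of the abelian $C^*$-algebra generated by $\{N_1,N_2,N_3\}$; evaluating at any character $\phi$ yields $(\phi(N_1),\phi(N_2),\phi(N_3)) = (\bar{w}z, w, z)$ with $z \in \T$ and $|w| \leq \|N_2\| \leq 1$, a triple which lies in $b\ov{\E}$ by Theorem~\ref{thm:21}.

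For the crucial direction $(2) \Rightarrow (3)$: the inequalities $\|N_1\|, \|N_2\| \leq 1$ come from von Neumann applied to the coordinate polynomials $z_1, z_2$. The content is the polar identity $N_1 = N_2^*N_3$. The plan is to exploit the scalar characterization (Theorem~\ref{thm:21}), which on $b\ov{\E}$ collapses to $x_1 = \bar{x}_2 x_3$ whenever $|x_3|=1$, and transfer this to the operator level by testing the rational von Neumann inequality against a parametric family such as $f_\lambda(z_1,z_2,z_3) = (z_1 - \bar{\lambda}z_3)(1-\lambda z_2)^{-1}$ with $\lambda \in \D$, whose supremum on $\ov{\E}$ can be bounded by $1$ via Theorem~\ref{thm:21}. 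Varying $\lambda$ and using the unitarity of $N_3$ to handle the boundary behaviour should integrate the resulting operator inequalities into the equality $N_1 = N_2^*N_3$. A cleaner alternative uses the fundamental equation available for every $\E$-contraction $(A,B,P)$, namely $A - B^*P = D_P F D_P$ for some $F \in \mathcal B(\mathcal D_P)$: specialising to the unitary $P=N_3$ makes $D_{N_3}=0$, giving the identity directly.

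The main obstacle is precisely this last implication: converting a purely norm-theoretic $\E$-contractivity hypothesis into the algebraic relation $N_1 = N_2^*N_3$ is where the specific boundary geometry of the tetrablock — beyond its being polynomially convex — enters the argument.
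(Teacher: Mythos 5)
First, note that the paper you are working from does not actually prove this theorem: it is imported verbatim from \cite{T:B} (Theorem 5.4), so there is no in-paper argument to match yours against. Judged on its own, your cyclic scheme is sound. The spectral-theorem argument for $(1)\Rightarrow(2)$ and the algebraic manipulations for $(3)\Rightarrow(1)$ (normality of $N_2$ from cancelling the invertible $N_3$, commutation of $N_2^*$ with $N_3$, and the character computation $(\bar w z, w, z)\in b\ov{\E}$ via Theorem \ref{thm:21}) are all correct. For the crux $(2)\Rightarrow(3)$, your ``cleaner alternative'' is in fact the efficient route in the context of this paper: Theorem \ref{exist-tetra} is available as a black box for every $\E$-contraction, its proof nowhere uses the present theorem, and taking $P=N_3$ unitary gives $D_{N_3}=0$, so the fundamental equation $N_1-N_2^*N_3=D_{N_3}F_1D_{N_3}$ collapses to $N_1=N_2^*N_3$ immediately. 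This is genuinely shorter than the argument in the cited source, which extracts the identity from operator inequalities obtained by applying von Neumann's inequality to the Abouhajar--White--Young rational functions; what the fundamental-operator route buys is brevity, at the cost of invoking the (nontrivial) existence theorem for $F_1,F_2$.

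Your ``Route A,'' however, is wrong as literally written: the function $f_\lambda(z)=(z_1-\bar\lambda z_3)(1-\lambda z_2)^{-1}$ does \emph{not} satisfy $\|f_\lambda\|_{\infty,\ov{\E}}\le 1$. For instance, $(1,-i/2,-i/2)$ is a triangular point of $\ov{\E}$ (one checks $|x_1-\bar x_2x_3|+|x_1x_2-x_3|=3/4=1-|x_2|^2$ and $|x_1|\le 1$), and at $\lambda=i/2$ one gets $|f_\lambda|=\frac{|1+1/4|}{|1-1/4|}=5/3>1$. The correct test family is $\psi_\lambda(z)=(z_1-\lambda z_3)(1-\lambda z_2)^{-1}$ --- holomorphic in $\lambda$, with no conjugate in the numerator --- together with its $z_1\leftrightarrow z_2$ counterpart; averaging the inequalities $(N_1-\lambda N_3)^*(N_1-\lambda N_3)\le (I-\lambda N_2)^*(I-\lambda N_2)$ over $|\lambda|=r$ and letting $r\to 1$ yields $N_1^*N_1=N_2^*N_2$ and then $N_2=N_1^*N_3$, whence $N_1=N_2^*N_3$ after a Fuglede step. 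Also, ``varying $\lambda$ \dots should integrate \dots into the equality'' is not yet an argument; the coefficient-extraction just described is the missing content. So: the proof stands via your Route B; Route A needs both the corrected function and the averaging argument spelled out.
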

In the following result, we have a set of characterizations for the $\E$-isometries.

\begin{thm}[{\cite{T:B}, Theorem 5.7}] \label{thm:ti}

    Let $\underline V = (V_1, V_2, V_3)$ be a commuting triple of
    bounded operators. Then the following are equivalent.

    \begin{enumerate}

        \item $\underline V$ is an $\mathbb E$-isometry.

        \item $V_3$ is an isometry and $\underline V$ is an $\mathbb
        E$-contraction.

        \item $V_3$ is an isometry, $V_2$ is a contraction and $V_1=V_2^*
        V_3$.
        
        \item (Wold decomposition) There is an orthogonal decomposition $\HS =\HS_1 \oplus \HS_2$ such that $\HS_1 , \HS_2$ are common reducing subspaces for $V_1,V_2,V_3$ and that $(V_1|_{\HS_1}, V_2|_{\HS_1},V_3|_{\HS_1})$ is an $\E$-unitary and $(V_1|_{\HS_2},V_2|_{\HS_2},V_3|_{\HS_2})$ is a pure $\E$-isometry.

    \end{enumerate}
\end{thm}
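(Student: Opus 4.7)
My plan is to close the cycle $(1)\Rightarrow(2)\Rightarrow(3)\Rightarrow(1)$ and to obtain $(1)\Leftrightarrow(4)$ from the Wold decomposition of $V_3$. For $(1)\Rightarrow(2)$, given an $\E$-unitary extension $(Q_1,Q_2,V)$ on $\mathcal K\supseteq\HS$, Theorem \ref{thm:tu} yields that $V$ is unitary, so its restriction $V_3=V|_\HS$ to the invariant subspace $\HS$ is an isometry; and the $\E$-contraction property transfers from $\underline Q$ to $\underline V$ through $f(V_1,V_2,V_3)=f(Q_1,Q_2,V)|_\HS$ for every rational $f$ holomorphic on $\ov{\E}$, with the analogous containment of Taylor spectra under invariant restriction.

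For $(2)\Rightarrow(3)$, applying the von Neumann inequality to the coordinate function $z_2$ immediately gives $\|V_2\|\le 1$. To derive the identity $V_1=V_2^*V_3$ I would invoke the fundamental operator equations of \cite{T:B}: every $\E$-contraction $(A,B,P)$ admits operators $F_1,F_2$ on $\ov{\mathrm{ran}}\,D_P$ satisfying
\[
A-B^*P=D_P F_1 D_P\quad\text{and}\quad B-A^*P=D_P F_2 D_P.
\]
Since $V_3$ is an isometry, $D_{V_3}=0$, both right-hand sides vanish, and the equations collapse to $V_1=V_2^*V_3$ together with the companion identity $V_2=V_1^*V_3$.

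For $(3)\Rightarrow(1)$, I would form the Wold decomposition $\HS=\HS_u\oplus\HS_s$ of the isometry $V_3$, with $\HS_u=\bigcap_n V_3^n\HS$. Using $V_iV_3=V_3V_i$ one gets $V_i\HS_u\subseteq\HS_u$ for $i=1,2$; combined with the identities $V_2=V_1^*V_3$, $V_1=V_2^*V_3$ and the fact that $V_3^*\HS_u\subseteq\HS_u$ (valid because $V_3|_{\HS_u}$ is unitary), this also yields $V_i^*\HS_u\subseteq\HS_u$, so the splitting reduces the whole triple. On $\HS_u$ the operator $V_3$ is unitary, and Theorem \ref{thm:tu}(3) then produces an $\E$-unitary outright. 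On $\HS_s$ the Wold theorem identifies $V_3|_{\HS_s}$ with $M_z$ on some $H^2(\mathcal W)$; since $V_2|_{\HS_s}$ commutes with $M_z$ it is an analytic Toeplitz $T_\Phi$ with $\Phi\in H^\infty(\mathcal B(\mathcal W))$, and the relations $V_1=V_2^*V_3$ and $V_1V_2=V_2V_1$ force the boundary values of $\Phi$ to be normal a.e.\ on $\T$; consequently the triple extends on $L^2(\T,\mathcal W)$ to $(M_\Phi^*M_z,M_\Phi,M_z)$, which is an $\E$-unitary by Theorem \ref{thm:tu}(3). Assembling the two summands produces the required $\E$-unitary extension, and the same Wold splitting delivers the equivalence $(1)\Leftrightarrow(4)$.

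The main obstacle I anticipate lies in $(3)\Rightarrow(1)$, specifically in deducing a.e.\ normality of the Toeplitz symbol $\Phi$ on the pure summand from the algebraic-commutation data; this is the point where the rigid interplay between $V_1=V_2^*V_3$ and the commutation relations of the triple must be exploited carefully to ensure that a normal commuting Laurent extension is genuinely available on $L^2(\T,\mathcal W)$.
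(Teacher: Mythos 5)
The paper states this result without proof, quoting it from \cite{T:B}, so there is no in-paper argument to compare against; your proposal is essentially the standard proof from that source and is correct: $(2)\Rightarrow(3)$ via the fundamental equations with $D_{V_3}=0$, and $(3)\Rightarrow(1),(4)$ via the Wold decomposition of $V_3$ together with the Toeplitz representation of the shift part. The one step you flag as delicate does close cleanly: since $V_1$ commutes with $T_z$ it equals $T_\Psi$ with $\Psi\in H^\infty$, and $T_\Psi=T_\Phi^*T_z=T_{z\Phi^*}$ forces $\Phi(z)=A_1^*+zA_2$ and $\Psi(z)=A_2^*+zA_1$ to be linear, after which commutativity of $V_1,V_2$ yields $[A_1,A_2]=0$ and $[A_1^*,A_1]=[A_2^*,A_2]$, i.e.\ the a.e.\ normality needed for the $L^2$ extension.
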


Note that the canonical decomposition of an $\E$-contraction $(A,B,P)$ acting on a Hilbert space $\HS$ (see \cite{S:P-tetra3}) splits $\HS$ into two orthogonal parts $\HS=\HS_1 \oplus \HS_2$ such that $(A|_{\HS_1}, B|_{\HS_1},P|_{\HS_1})$ is an $\E$-unitary and $(A|_{\HS_2},B|_{\HS_2},P|_{\HS_2})$ is an $\E$-contraction with $P|_{\HS_2}$ being a c.n.u. contraction. This fact along with the Wold decomposition of an $\E$-isometry lead to defining the following natural classes of $\E$-contractions.

\begin{defn}\label{def3}
    Let $(A,B, P)$ be an $\mathbb E$-contraction on a Hilbert space $\mathcal
    H$. We say that $(A,B,P)$ is
    \begin{itemize}
        \item[(i)] a \textit{completely non-unitary} $\E$-\textit{contraction}, or simply a \textit{c.n.u.} $\mathbb E$-\textit{contraction} if $P$ is a
        c.n.u. contraction ;
        
        \item[(ii)] a $C._0$ or \textit{pure} $\E$-\textit{contraction} if $P$ is a $C._0$ contraction, i.e. ${P^*}^n \rightarrow 0$ strongly as $n \rightarrow \infty$.
        \end{itemize}

\end{defn}

The next theorem plays central role in the extensively studied operator theory of the tetrablock. See \cite{T:B, B-P1, S:P-tetra1, S:P-tetra2, S:P-tetra3} for further details.

\begin{thm}[\cite{T:B}, Theorem 1.3] \label{exist-tetra}
To every $\mathbb
E$-contraction $(A, B, P)$ there were two unique operators $F_1$
and $F_2$ on $\mathcal{\mathcal{D}_P} =\overline{\text{Ran}}(I -
P^*P)$ that satisfied the fundamental equations, i.e,
\[
A-B^*P = D_PF_1D_P\,, \qquad B-A^*P = D_PF_2D_P.
\]
\end{thm}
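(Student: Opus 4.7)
The plan is to treat uniqueness and existence of $F_1$ separately; $F_2$ follows by the same argument applied to the $\E$-contraction $(B, A, P)$, which swaps the roles of the first two coordinates in Theorem \ref{thm:21}. For uniqueness, the defect space $\mathcal D_P$ is built so that $D_P$ has dense range in $\mathcal D_P$. If two operators $F_1, F_1' \in \mathcal B(\mathcal D_P)$ both satisfy $A - B^*P = D_P F_1 D_P = D_P F_1' D_P$, then $D_P(F_1 - F_1')D_P = 0$ on $\HS$, so $\la (F_1 - F_1') D_P x, D_P y \ra = 0$ for all $x, y \in \HS$, and density of $\{D_P x : x \in \HS\}$ in $\mathcal D_P$ forces $F_1 = F_1'$.

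For existence, I would reduce the problem to establishing the bilinear bound
\[
|\la (A - B^*P) x, y \ra| \leq M \|D_P x\| \|D_P y\|, \qquad x, y \in \HS,
\]
for some $M > 0$. Such a bound defines a bounded sesquilinear form on $\mathcal D_P$ (well-definedness on the quotient $\HS/\text{Ker}(D_P)$ is automatic from the inequality), which by Riesz representation is implemented by some $F_1 \in \mathcal B(\mathcal D_P)$ with $\la F_1 D_P x, D_P y \ra = \la (A - B^*P) x, y \ra$, i.e., $A - B^*P = D_P F_1 D_P$.

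To establish the bilinear bound, I would exploit the scalar characterization of $\Ebar$ in Theorem \ref{thm:21}, which gives $|x_1 - \ov x_2 x_3| \leq 1 - |x_2|^2$ on $\Ebar$. This implies that the rational functions
\[
\phi_z(x_1, x_2, x_3) = \dfrac{x_1 - \ov z x_3}{1 - z x_2}, \qquad z \in \D,
\]
map $\Ebar$ into $\ov{\D}$ (see \cite{A:W:Y})---the denominator is nonvanishing on $\Ebar$ since $|x_2| \leq 1$ and $|z| < 1$. The von Neumann inequality for the $\E$-contraction $(A, B, P)$ applied to $\phi_z$ yields
\[
(A - \ov z P)(A^* - z P^*) \leq (I - zB)(I - \ov z B^*), \qquad z \in \D.
\]
The main obstacle is extracting from this parametric family of operator inequalities the single bilinear bound needed above. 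My approach would be to pass to boundary values $z = e^{i\theta}$, expand both sides as trigonometric polynomials in $\theta$, and isolate the coefficients coupled to $A - B^*P$; combining with the analogous inequality from the partner function $\phi_z'(x_1, x_2, x_3) = (x_2 - \ov z x_3)/(1 - z x_1)$ should allow the stray $(I - AA^*)$ and $(BB^* - PP^*)$ contributions to cancel, producing the required bound in terms of $D_P$.
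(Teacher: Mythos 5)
This theorem is not proved in the paper at all --- it is imported verbatim from \cite{T:B} (Theorem 1.3) --- so the only meaningful comparison is with the proof in that reference. Your uniqueness argument is correct, your reduction of existence to the bilinear bound $|\la (A-B^*P)x,y\ra|\leq M\|D_Px\|\,\|D_Py\|$ is correct, and your overall strategy (von Neumann's inequality applied to the degree-one fractional maps that send $\Ebar$ into $\ov{\D}$) is exactly the one used in \cite{T:B}. However, the execution has a genuine gap at the decisive step, plus two slips. First, the function you wrote, with $\bar z$ in the numerator and $z$ in the denominator, is \emph{not} $\ov{\D}$-valued on $\Ebar$: at the boundary point $(i,-i,1)\in b\Ebar$ and $z=i/2$ one computes $|\phi_z|=3$. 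The correct family is $(x_1-zx_3)/(1-zx_2)$. Second, since $\|XY^{-1}\|\leq 1$ is equivalent to $X^*X\leq Y^*Y$ (not $XX^*\leq YY^*$), the inequality von Neumann actually delivers is $(A-zP)^*(A-zP)\leq (I-zB)^*(I-zB)$; the adjoint-on-the-right version you displayed expands into $AA^*$, $PP^*$ terms and would manufacture an operator on $\mathcal D_{P^*}$ rather than on $\mathcal D_P$.

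The real gap is in the cancellation step. Expanding the correct inequality and its partner (obtained by swapping the roles of the first two coordinates) and letting $|z|\to 1$ gives
\[
z\left[(A-B^*P)+(B-A^*P)\right]+\bar z\left[(A-B^*P)+(B-A^*P)\right]^*\;\leq\; 2D_P^2 ,\qquad z\in\T,
\]
so the stray $A^*A$ and $B^*B$ terms indeed cancel, but only at the cost of coupling the two target operators: this controls the \emph{sum} $(A-B^*P)+(B-A^*P)$, not $A-B^*P$ alone. ``Isolating Fourier coefficients'' of a one-parameter family of operator inequalities is not a legitimate operation; all one may extract is $|\la[(A-B^*P)+(B-A^*P)]h,h\ra|\leq\|D_Ph\|^2$. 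The missing idea --- and the one \cite{T:B} uses --- is the torus symmetry $(x_1,x_2,x_3)\mapsto(\alpha x_1,\beta x_2,\alpha\beta x_3)$ of $\Ebar$ for $\alpha,\beta\in\T$: applying the summed inequality to the $\E$-contraction $(\alpha A,\beta B,\alpha\beta P)$ (which has the same $D_P$) replaces the bracket by $\alpha(A-B^*P)+\beta(B-A^*P)$, and taking the supremum over $\alpha,\beta$ decouples the terms, yielding $|\la(A-B^*P)h,h\ra|+|\la(B-A^*P)h,h\ra|\leq\|D_Ph\|^2$. Even then you only have a quadratic (numerical-radius type) bound; since $A-B^*P$ is not self-adjoint, a polarization or Cauchy--Schwarz argument on the positive forms $D_P^2\pm\mathrm{Re}(\alpha(A-B^*P))$ is still needed to reach the sesquilinear bound you reduced to (at the cost of a harmless constant). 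Without the symmetry step your argument proves only the existence of a single operator $F$ with $(A-B^*P)+(B-A^*P)=D_PFD_P$.
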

The operators $F_1,F_2$ are called the \textit{fundamental
operators} of $(A, B, P)$.

\vspace{0.4cm}

\section{THE TRIANGULAR $\E$-CONTRACTIONS, DILATION AND FACTORIZATION OF CONTRACTIONS} \label{sec:3}

\vspace{0.4cm}

\noindent We learned from Theorem \ref{thm:21} that if $x_1, x_2$ is in $\ov{\D}$, then $(x_1, x_2, x_1x_2)$ belongs to $\ov{\E}$ and such triples are known as \textit{triangular points} of the closed tetrablock. The set of triangular points play important role in determining several characterizations of the automorphisms of the tetrablock, e.g. see \cite{A:W:Y}. Here we shall study the $\E$-contractions which are triangular, i.e. of the form $(P,Q,PQ)$ for a pair of commuting contractions $P,Q$. We give an alternative proof to the famous Berger-Coburn-Lebow model theorem for commuting isometries and also find a different way of reaching the factorization of a $C._0$ contraction described in \cite{D:S:S}. Let us be armed with a few preparatory results. We begin with a simple result whose proof is a routine exercise.

\begin{lem}\label{lem:23}

If $X\subseteq \mathbb C^n$ is a polynomially convex set, then $X$
is a spectral set for a commuting tuple $(T_1,\dots,T_n)$ if and
only if

\begin{equation}\label{pT}
\|f(T_1,\dots,T_n)\|\leq \| f \|_{\infty,\, X}\,
\end{equation}
for all holomorphic polynomials $f$ in $n$-variables.

\end{lem}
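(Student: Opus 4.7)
The plan is to observe that the forward implication is immediate from the definition of a spectral set, since every holomorphic polynomial is a rational function with denominator $1$, so \eqref{pT} is a special case of the rational von Neumann inequality. The content of the lemma lies in the converse, which I would split into two independent steps: locating the Taylor joint spectrum of $(T_1,\dots,T_n)$ inside $X$, and then upgrading the hypothesised polynomial inequality to all rational functions with poles off $X$.

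For the spectrum containment, I would argue by contradiction. Suppose some $z \in \sigma_T(T_1,\dots,T_n)$ fails to lie in $X$. Polynomial convexity of $X$ supplies a holomorphic polynomial $p$ with $|p(z)| > \|p\|_{\infty, X}$. The spectral mapping theorem for the Taylor joint spectrum places $p(z)$ in $\sigma(p(T_1,\dots,T_n))$, whence $\|p(T_1,\dots,T_n)\| \geq |p(z)| > \|p\|_{\infty, X}$, directly contradicting \eqref{pT}. Thus $\sigma_T(T_1,\dots,T_n) \subseteq X$, and in particular the rational functional calculus $f(T_1,\dots,T_n) = p(T_1,\dots,T_n)\, q(T_1,\dots,T_n)^{-1}$ is well defined for every $f = p/q$ whose denominator $q$ is zero-free on $X$.

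For the rational upgrade, fix such an $f = p/q$; then $f$ is holomorphic in some open neighbourhood $U$ of $X$. Using polynomial convexity, I would sandwich $X$ inside a polynomial polyhedron $K$ (itself polynomially convex) with $X$ contained in the interior of $K$ and $K \subset U$, and then apply the Oka--Weil approximation theorem on $K$ to produce polynomials $p_k$ converging to $f$ uniformly on $K$, and in particular on an open neighbourhood of $\sigma_T(T_1,\dots,T_n)$. Continuity of the Taylor holomorphic functional calculus in the topology of uniform convergence on neighbourhoods of the joint spectrum then gives $p_k(T_1,\dots,T_n) \to f(T_1,\dots,T_n)$ in operator norm, while $\|p_k\|_{\infty, X} \to \|f\|_{\infty, X}$. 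Passing to the limit in the hypothesised polynomial inequality $\|p_k(T_1,\dots,T_n)\| \leq \|p_k\|_{\infty, X}$ yields the required estimate $\|f(T_1,\dots,T_n)\| \leq \|f\|_{\infty, X}$. The only mildly non-trivial ingredient in the whole argument is this sandwiching step together with the continuity of the Taylor functional calculus for uniform convergence on spectral neighbourhoods; everything else is bookkeeping, which is consistent with the paper's description of the proof as a routine exercise.
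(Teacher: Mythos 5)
The paper does not actually prove this lemma --- it is introduced with the remark that its proof is ``a routine exercise'' and no argument is supplied --- so there is no proof of record to compare yours against; judged on its own merits, your argument is correct and complete. The forward direction is indeed immediate, your separation argument for $\sigma_T(T_1,\dots,T_n)\subseteq X$ via polynomial convexity and the polynomial spectral mapping theorem for the Taylor spectrum is the standard one (note you are tacitly using that $X$ is compact, which is what polynomial convexity presupposes and what holds in the paper's application to $\overline{\mathbb E}$), and the Oka--Weil upgrade to rational functions is sound. One remark on that last step: you can avoid invoking the polynomial-polyhedron sandwich and the continuity of the Taylor functional calculus altogether. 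Once $\sigma_T(T_1,\dots,T_n)\subseteq X$ is known, $q(T_1,\dots,T_n)$ is invertible by spectral mapping; take polynomials $r_k\to 1/q$ uniformly on $X$ by Oka--Weil, observe that the hypothesised inequality \eqref{pT} makes $\bigl(r_k(T_1,\dots,T_n)\bigr)_k$ a Cauchy sequence in operator norm whose limit $S$ satisfies $S\,q(T_1,\dots,T_n)=I$ because $r_kq\to 1$ uniformly on $X$, and conclude that $\|p(T)q(T)^{-1}\|=\lim_k\|(pr_k)(T)\|\leq\lim_k\|pr_k\|_{\infty,X}=\|p/q\|_{\infty,X}$. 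This keeps the entire converse at the level of the polynomial inequality itself plus Oka--Weil, which is perhaps closer in spirit to what the author means by ``routine''; your route through the Taylor holomorphic functional calculus buys nothing extra here but is equally valid.
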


The following elementary result will be of frequent use throughout the paper.

\begin{lem} [\cite{S:Pal-Decomp}, Lemma 3.3] \label{lem:E-1}
Let $A,B$ be commuting contractions on a Hilbert space $\HS$. Then $AB$ is unitary if and only if $A$ and $B$ are unitaries.
\end{lem}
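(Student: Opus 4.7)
The ``if'' direction is immediate: two commuting unitaries always have a unitary product. For the converse, my plan is to show that both $A$ and $B$ are separately isometric and co-isometric, and hence unitary. Assume $(AB)^*(AB)=(AB)(AB)^*=I$.

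For the isometric side, the identity $(AB)^*(AB)=I$ says $\|ABx\|=\|x\|$ for every $x\in\HS$. Combined with $\|A\|,\|B\|\leq 1$, the pinching
\[
\|x\|=\|ABx\|\leq \|Bx\|\leq \|x\|
\]
collapses, forcing $\|Bx\|=\|x\|$ for all $x$; thus $B$ is an isometry. Commutativity rewrites the same identity as $\|BAx\|=\|x\|$, and the analogous pinching $\|x\|=\|BAx\|\leq \|Ax\|\leq \|x\|$ shows $A$ is an isometry too.

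For the co-isometric side I would run the adjoint version of the argument. From $(AB)(AB)^*=ABB^*A^*=I$, inserting $y$ gives $\|B^*A^*y\|^2=\langle ABB^*A^*y,y\rangle=\|y\|^2$, whence the pinching
\[
\|y\|=\|B^*A^*y\|\leq \|A^*y\|\leq \|y\|
\]
yields $\|A^*y\|=\|y\|$. Using $AB=BA$, the same relation may be written as $(BA)(BA)^*=BAA^*B^*=I$, and the parallel chain $\|y\|=\|A^*B^*y\|\leq \|B^*y\|\leq \|y\|$ forces $\|B^*y\|=\|y\|$. Hence each of $A$, $A^*$, $B$, $B^*$ is an isometry, so $A$ and $B$ are unitaries.

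I do not anticipate any genuine obstacle here: the whole argument rests on the submultiplicativity of the operator norm and on the ability, granted by commutativity, to interchange $AB$ and $BA$ freely at every step. The proof should therefore fit in a few lines, consistent with the paper's remark that it is a routine exercise.
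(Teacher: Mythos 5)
Your argument is correct and is the standard one: the paper itself gives no proof, merely citing the result from \cite{S:Pal-Decomp}, and the sandwiching of norms $\|x\|=\|ABx\|\leq\|Bx\|\leq\|x\|$ (together with its three commuted/adjoint variants) is exactly the routine computation the citation is standing in for. You also correctly isolate where commutativity enters (passing from $AB$ to $BA$ to handle $A$ and $B^*$), without which the statement fails, as the shift pair $A=S^*$, $B=S$ shows.
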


\begin{cor}
For commuting contractions $P_1,\dots ,P_n$, $\prod_{i=1}^n P_i$ is unitary if and only if each of $P_1,\dots ,P_n$ is a unitary.
\end{cor}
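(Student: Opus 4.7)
The plan is to derive this corollary from Lemma \ref{lem:E-1} by a straightforward induction on $n$, with $n=2$ serving as the base case. The reverse implication (each $P_i$ unitary $\Rightarrow$ $\prod P_i$ unitary) is immediate, since a finite product of commuting unitaries is a unitary: its adjoint is the reverse product of adjoints, which coincides with the product itself by commutativity, and $(\prod P_i)(\prod P_i)^* = \prod P_i P_i^* = I$, and similarly on the other side.

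For the forward direction, assume the statement holds for $n-1$ commuting contractions and suppose $P_1, \dots, P_n$ are commuting contractions with $\prod_{i=1}^n P_i$ unitary. I would set $Q = \prod_{i=2}^n P_i$. Two observations make this reduce to the $n=2$ case: first, $\|Q\| \le \prod_{i=2}^n \|P_i\| \le 1$, so $Q$ is a contraction; second, $P_1$ commutes with each $P_i$ for $i \ge 2$, hence with $Q$. Thus $(P_1, Q)$ is a pair of commuting contractions whose product $P_1 Q = \prod_{i=1}^n P_i$ is unitary, so Lemma \ref{lem:E-1} yields that both $P_1$ and $Q$ are unitary.

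Now apply the induction hypothesis to $Q = P_2 \cdots P_n$: the commuting contractions $P_2, \dots, P_n$ have a unitary product, so each $P_i$ for $i \geq 2$ is unitary. Combined with unitarity of $P_1$, this gives the result. Since the induction step is essentially a one-line invocation of the binary case, there is no real obstacle; the only thing worth checking is that one preserves both commutativity and the contraction property when replacing $P_2, \dots, P_n$ by their product $Q$, and both are standard.
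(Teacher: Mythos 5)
Your proof is correct and follows exactly the route the paper intends: the corollary is stated without proof as an immediate consequence of Lemma \ref{lem:E-1}, obtained by the straightforward induction you carry out (grouping $P_2\cdots P_n$ into a single commuting contraction $Q$ and applying the binary case). Nothing is missing.
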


\begin{cor}\label{cor:E-11}
For an $\E$-unitary $(A,B,P)$, if $P=AB$ then $A,B$ are unitaries.
\end{cor}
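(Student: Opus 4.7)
The plan is to reduce the claim directly to Lemma \ref{lem:E-1} once we know that $A$ and $B$ are individually contractions. Since $(A,B,P)$ is an $\E$-unitary, Theorem \ref{thm:tu} tells us that $P$ is unitary and that $(A,B,P)$ is an $\E$-contraction; in particular item (3) of that theorem gives $B$ a contraction and $A = B^{*}P$. From $A = B^{*}P$ with $P$ unitary and $\|B^{*}\|\le 1$ it follows immediately that $\|A\|\le 1$, so $A$ is a contraction as well.

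Now $A$ and $B$ are two commuting contractions on $\HS$ whose product $AB = P$ is unitary. Lemma \ref{lem:E-1} applies verbatim and yields that both $A$ and $B$ are unitaries, which is the desired conclusion.

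There is no real obstacle to overcome here: the statement is a one-line consequence of the structural characterization of $\E$-unitaries in Theorem \ref{thm:tu} together with the commuting-contractions lemma. The only thing one has to be mildly careful about is invoking the right part of Theorem \ref{thm:tu} to conclude simultaneously that $P$ is unitary and that $A,B$ are contractions; after that, the argument is just Lemma \ref{lem:E-1}.
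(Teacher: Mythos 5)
Your proof is correct and follows essentially the same route the paper intends: the corollary is stated without proof there, but the implicit argument (used explicitly later in the proof of Lemma \ref{lem:char-triang1}) is exactly to note that $P=AB$ is unitary and that $A,B$ are commuting contractions, then apply Lemma \ref{lem:E-1}. Your extra care in deriving contractivity of $A$ from part (3) of Theorem \ref{thm:tu} is a harmless refinement of the same argument.
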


Now we recall from our previous work \cite{S:P-tetra1} a result that provides a model for a pure $\E$-isometry in terms of Toeplitz operators on a certain vectorial Hardy Hilbert space.

\begin{thm} [\cite{S:P-tetra1}, Theorem 3.3] \label{modelthm1}
Let $(\hat{T_1},\hat{T_2},\hat{T_3})$ be a pure $\mathbb
E$-isometry acting on a Hilbert space $\mathcal H$ and let
$A_1,A_2$ denote the corresponding fundamental operators. Then
there exists a unitary $U:\mathcal H \rightarrow H^2(\mathcal
D_{{\hat{T_3}}^*})$ such that
\[
\hat{T_1}=U^*T_{\varphi}U,\quad \hat{T_2}=U^*T_{\psi}U \textup{
and } \hat{T_3}=U^*T_zU,
\]
where $\varphi(z)= G_1^*+ zG_2,\,\psi(z)= G_2^*+zG_1, \quad
z\in\mathbb D$ and $G_1=UA_1U^*$ and $G_2=UA_2U^*$. Moreover,
$A_1,A_2$ satisfy
\begin{enumerate}
\item $[A_1,A_2]=0\,;$ \item $[A_1^*,A_1]=[A_2^*,A_2] \,;$ and
\item $\|A_1^*+zA_2\|\leq 1$ for all $z\in {\mathbb D}$.
\end{enumerate}
Conversely, if $A_1$ and $A_2$ are two bounded operators on a
Hilbert space $E$ satisfying the above three conditions, then
$(T_{A_1^*+zA_2},T_{A_2^*+zA_1},T_z)$ on $H^2(E)$ is a pure
$\mathbb E$-isometry.
\end{thm}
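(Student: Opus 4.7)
The strategy is to exploit the fact that a pure $\E$-isometry has a pure isometric third coordinate, put it into canonical Wold form as $T_z$ on $H^2(\mathcal D_{\hat T_3^*})$, and then use the structural identities that come with being an $\E$-isometry to pin down the Toeplitz symbols of the other two coordinates. By Theorem \ref{thm:ti}(2) the operator $\hat T_3$ is an isometry, and purity of the triple makes it a pure isometry; the Wold decomposition yields a unitary $U:\HS\to H^2(\mathcal D_{\hat T_3^*})$ with $U\hat T_3 U^* = T_z$. Since $\hat T_1,\hat T_2$ commute with $\hat T_3$, their conjugates by $U$ commute with $T_z$ and hence are analytic Toeplitz operators $T_\varphi, T_\psi$ for some $\varphi,\psi\in H^\infty_{\mathcal B(\mathcal D_{\hat T_3^*})}(\D)$.

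Because $D_{\hat T_3}=0$, the fundamental equations of Theorem \ref{exist-tetra} degenerate to $\hat T_1 = \hat T_2^*\hat T_3$ and $\hat T_2 = \hat T_1^*\hat T_3$; transported to the model, this reads $T_\varphi = T_\psi^* T_z$. A coefficient-by-coefficient expansion of $T_\psi^* T_z$ forces all Fourier coefficients of $\psi$ beyond the linear term to vanish, so $\psi(z)=\psi_0+z\psi_1$ and $\varphi(z)=\psi_1^*+z\psi_0^*$; setting $G_1:=\psi_1$ and $G_2:=\psi_0^*$ produces the asserted form $\varphi(z)=G_1^*+zG_2$ and $\psi(z)=G_2^*+zG_1$. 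To identify $G_1,G_2$ with the fundamental operators $A_1,A_2$ of the pure $\E$-co-isometry $(\hat T_1^*,\hat T_2^*,\hat T_3^*)$, I would transport the defining equations $\hat T_i^* - \hat T_{3-i}\hat T_3^* = D_{\hat T_3^*}A_i D_{\hat T_3^*}$ through $U$ and verify by a direct coefficient computation that $(T_\varphi^* - T_\psi T_z^*)f$ extracts the constant term of $f\in H^2(\mathcal D_{\hat T_3^*})$ and applies $G_1$ to it; since $D_{T_z^*}$ is the projection onto the constants, the uniqueness clause of Theorem \ref{exist-tetra} then gives $G_1 = UA_1U^*$, and symmetrically for $G_2$.

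The three algebraic conditions now drop out: commutativity $T_\varphi T_\psi=T_\psi T_\varphi$ is equivalent to the polynomial identity $\varphi(z)\psi(z)=\psi(z)\varphi(z)$, and matching coefficients of $1,\,z,\,z^2$ yields exactly $[G_1,G_2]=0$ and $[G_1^*,G_1]=[G_2^*,G_2]$, while contractivity of $\hat T_1=T_\varphi$ translates to $\|G_1^*+zG_2\|\le 1$ for $z\in\D$. For the converse, starting from $A_1,A_2$ on $E$ satisfying (1)--(3), the commutator conditions reverse to give $T_\varphi T_\psi=T_\psi T_\varphi$; a short computation shows $T_\psi = T_\varphi^* T_z$, hence $\|T_\psi\|\le\|T_\varphi\|\le 1$; and Theorem \ref{thm:ti}(3) certifies $(T_\varphi,T_\psi,T_z)$ as an $\E$-isometry, automatically pure because $T_z$ is a pure isometry. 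The main delicacy is the identification with the fundamental operators, which rests on the coefficient-level verification that $T_\varphi^* - T_\psi T_z^*$ acts as $P_0 G_1 P_0$ on $H^2(\mathcal D_{\hat T_3^*})$ with $P_0$ the projection onto constants; everything else reduces to the Wold decomposition and routine Fourier bookkeeping for analytic Toeplitz operators.
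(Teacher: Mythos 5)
Your argument is correct and follows essentially the same route as the original proof in \cite{S:P-tetra1} (the present paper only quotes Theorem \ref{modelthm1} without reproving it): Wold decomposition of the pure isometry $\hat T_3$, the commutant of the vector-valued shift to realize $\hat T_1,\hat T_2$ as analytic Toeplitz operators, the identity $\hat T_1=\hat T_2^*\hat T_3$ from Theorem \ref{thm:ti} to force the symbols to be linear, and the uniqueness clause of Theorem \ref{exist-tetra} together with $D_{T_z^*}=P_0$ to identify the coefficients with the fundamental operators of the adjoint triple. I see no gaps; the only step worth an explicit citation is the description of the commutant of $T_z$ on $H^2(\mathcal D_{\hat T_3^*})$ as the algebra of multiplication operators with bounded $\mathcal B(\mathcal D_{\hat T_3^*})$-valued analytic symbols.
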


We now come to the study of triangular $\E$-contractions. We begin with the following lemma from \cite{S:Pal-Decomp}.

\begin{lem} [\cite{S:Pal-Decomp}, Lemma 3.2] \label{lem:triangular1}
If $P,Q$ are commuting contractions, then $(P,Q,PQ)$ is an $\E$-contraction.
\end{lem}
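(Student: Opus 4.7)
My plan is to reduce the spectral set condition to a polynomial inequality via Lemma \ref{lem:23}, and then invoke Ando's theorem on dilation of pairs of commuting contractions. The decisive geometric input is the fact—already highlighted by the authors and reaffirmed in Theorem \ref{thm:21}—that every triangular point $(z_1,z_2,z_1z_2)$ with $z_1,z_2\in\overline{\D}$ lies in $\overline{\E}$.

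First I would note that $P,Q,PQ$ form a commuting triple, and that $\overline{\E}$ is a polynomially convex compact subset of $\mathbb{C}^3$. By Lemma \ref{lem:23}, to establish that $\overline{\E}$ is a spectral set for $(P,Q,PQ)$ it suffices to verify the polynomial von Neumann inequality
\[
\|f(P,Q,PQ)\|\leq \|f\|_{\infty,\,\overline{\E}}
\]
for every $f\in\mathbb{C}[z_1,z_2,z_3]$. Given such an $f$, I would form the two-variable polynomial
\[
g(z_1,z_2):=f(z_1,z_2,z_1z_2),
\]
so that $f(P,Q,PQ)=g(P,Q)$. Since $P,Q$ are commuting contractions, Ando's inequality yields
\[
\|g(P,Q)\|\leq \sup_{z_1,z_2\in\overline{\D}}|g(z_1,z_2)| \;=\; \sup_{z_1,z_2\in\overline{\D}}|f(z_1,z_2,z_1z_2)|.
\]
Because each triangular point $(z_1,z_2,z_1z_2)$ belongs to $\overline{\E}$, the right-hand side is bounded above by $\|f\|_{\infty,\,\overline{\E}}$, which completes the polynomial estimate.

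Finally I would verify the joint spectrum condition $\sigma_T(P,Q,PQ)\subseteq\overline{\E}$; this is automatic from what we have, by polynomial convexity. Indeed, if $(x_1,x_2,x_3)\in\sigma_T(P,Q,PQ)$, then by the Taylor spectral mapping theorem $f(x_1,x_2,x_3)\in\sigma(f(P,Q,PQ))$ for every polynomial $f$, hence
\[
|f(x_1,x_2,x_3)|\leq \|f(P,Q,PQ)\|\leq \|f\|_{\infty,\,\overline{\E}}.
\]
The polynomial convexity of $\overline{\E}$ then forces $(x_1,x_2,x_3)\in\overline{\E}$, and together with the polynomial inequality this shows (via Lemma \ref{lem:23} again, extended to rationals with no poles on $\overline{\E}$) that $(P,Q,PQ)$ is an $\E$-contraction.

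There is no real obstacle here—the argument is a clean combination of three standard ingredients (polynomial convexity of $\overline{\E}$, Ando's theorem, and the triangular-point inclusion $(z_1,z_2,z_1z_2)\in\overline{\E}$). The only point that requires care is making explicit why polynomial convexity lets one jump from a polynomial inequality to the full definition of a spectral set, which I would handle by the spectral-mapping argument sketched above.
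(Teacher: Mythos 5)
Your proof is correct and is essentially the expected argument; the paper itself imports this lemma from \cite{S:Pal-Decomp} without reproving it, and the cited proof rests on exactly the same ingredients you use. Reducing to the polynomial von Neumann inequality via polynomial convexity (Lemma \ref{lem:23}), substituting $z_3=z_1z_2$ to invoke Ando's inequality for the pair $(P,Q)$, bounding the bidisc supremum by $\|f\|_{\infty,\,\ov{\E}}$ through the triangular-point inclusion of Theorem \ref{thm:21}, and recovering $\sigma_T(P,Q,PQ)\subseteq\ov{\E}$ by the spectral mapping theorem is a complete and correct proof with no gaps.
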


\begin{defn}
Let $(A,B,P)$ be an $\E$-contraction. Then $(A,B,P)$ is said to be
\begin{itemize}

\item[(i)] a \textit{triangular} $\E$-\textit{contraction} if $(A,B,P)=(P,Q,PQ)$ for a pair of commuting contractions $P,Q$.
 
 \item[(ii)] a \textit{triangular} $\E$-\textit{isometry} (or, a \textit{triangular} $\E$-\textit{unitary}) if $(A,B,P)$ is an $\E$-isometry (or an $\E$-unitary) which is of the form $(P,Q,PQ)$ for a pair of commuting contractions $P,Q$.
 
 \end{itemize}
 
\end{defn}

Of particular interest is the class of triangular $\E$-isometries and unitaries. The following lemma gives a clear description of them.

\begin{lem} \label{lem:char-triang1}
Let $A,B,P$ be a commuting triple of Hilbert space contractions. Then
\begin{itemize}

\item[(a)] $(A,B,P)$ is a triangular $\E$-unitary if and only if $(A,B,P)=(P,Q,PQ)$ for a pair of commuting unitaries $P,Q$;

\item[(b)] $(A,B,P)$ is a triangular $\E$-isometry if and only if $(A,B,P)=(P,Q,PQ)$ for a pair of commuting isometries $P,Q$.

\end{itemize}
\end{lem}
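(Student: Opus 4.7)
The plan is to handle (a) and (b) separately, using the characterizations of $\E$-unitaries and $\E$-isometries in Theorems \ref{thm:tu} and \ref{thm:ti} together with Corollary \ref{cor:E-11}.

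For part (a), the forward direction is immediate: a triangular $\E$-unitary is, by definition, an $\E$-unitary $(A,B,P)$ of the form $(R,S,RS)$ for some commuting contractions $R,S$, so in particular $P=AB$, and Corollary \ref{cor:E-11} at once forces $A$ and $B$ to be unitaries. For the converse, given commuting unitaries $R,S$, I would verify condition~(3) of Theorem \ref{thm:tu} for the triple $(R,S,RS)$: the product $RS$ is unitary because $R,S$ are commuting unitaries, $S$ is (trivially) a contraction, and the relation $R=S^{*}(RS)=S^{*}SR$ holds since $S^{*}S=I$. Hence $(R,S,RS)$ is an $\E$-unitary, and the triangular form is built in.

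For part (b), the converse direction is analogous via Theorem \ref{thm:ti}(3): if $R,S$ are commuting isometries, then $RS$ is an isometry, $S$ is a contraction, and $R=S^{*}(RS)=S^{*}SR$, so $(R,S,RS)$ is an $\E$-isometry of the required triangular form. The only genuine content is the forward direction, where I must argue that a pair of commuting contractions $R,S$ whose product $RS$ is an isometry must each be isometries themselves. For this, note that for every vector $h$,
\[
\|h\|=\|RSh\|=\|SRh\|\le \|Rh\|\le \|h\|,
\]
using $RS=SR$ together with the fact that $R$ and $S$ are contractions. Equality throughout forces $\|Rh\|=\|h\|$, so $R$ is an isometry; exchanging the roles of $R$ and $S$ shows $S$ is also an isometry. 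Combining this with Theorem \ref{thm:ti}(2), which tells us that for a triangular $\E$-isometry $(A,B,P)=(R,S,RS)$ the third component $P=RS$ is an isometry, completes the proof.

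I do not anticipate any real obstacle; the only substantive ingredient is the short norm-chasing argument above, and the rest is a direct translation between the definition of a triangular $\E$-contraction and the cited characterization theorems.
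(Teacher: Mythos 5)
Your proposal is correct, and for the substantive half of part (b) it takes a genuinely different and more elementary route than the paper. The easy directions coincide in spirit: the paper deduces the converse implications from part (2) of Theorems \ref{thm:tu} and \ref{thm:ti} together with Lemma \ref{lem:triangular1}, while you verify condition (3) directly; and for the forward direction of (a) both arguments reduce to Lemma \ref{lem:E-1} (your Corollary \ref{cor:E-11}). The real divergence is in the forward direction of (b). The paper invokes the Wold decomposition of an $\E$-isometry and the functional model of Theorem \ref{modelthm1}, reduces to the pure part $(T_{F_1^*+zF_2},T_{F_2^*+zF_1},T_z)$, extracts the operator identities $F_1F_2=F_2F_1=0$ and $F_1^*F_1+F_2F_2^*=F_1F_1^*+F_2^*F_2=I$ from the triangularity relation, and then checks that the two Toeplitz operators are isometries. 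You instead observe that once Theorem \ref{thm:ti}(2) gives that $P=RS$ is an isometry, the chain
\[
\|h\|=\|RSh\|=\|SRh\|\le\|Rh\|\le\|h\|
\]
forces $R$ (and by symmetry $S$) to be an isometry. This norm-squeezing argument is shorter, avoids the model machinery entirely, and works verbatim on any Hilbert space; what it does not buy you are the explicit fundamental-operator identities, which the paper's route produces as a by-product and reuses in the proof of Theorem \ref{thm:model-triang}. Both proofs are complete and correct.
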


\begin{proof}

If $P,Q$ are commuting isometries (or unitaries), then it follows from part-(2) of Theorem \ref{thm:ti} (or Theorem \ref{thm:tu}) that $(P,Q,PQ)$ is a triangular $\E$-isometry (or a triangular $\E$-unitary). So, we prove the converse parts.

Let $(A,B,P)$ be a triangular $\E$-unitary. Then $(A,B,P)=(P,Q,PQ)$ for a pair of commuting contractions $P,Q$ and Lemma \ref{lem:E-1} shows that $P,Q$ are unitaries.

Suppose $(A,B,P)$ is a triangular $\E$-isometry. By Wold-decomposition as in Theorem \ref{thm:ti} and by the model Theorem \ref{modelthm1}, we have
\[
(A,B,P)=\left(
\begin{bmatrix}
A_{11} & 0 \\
0 & T_{F_1^*+zF_2}
\end{bmatrix},
\begin{bmatrix}
B_{11} & 0 \\
0 & T_{F_2^*+zF_1}
\end{bmatrix},
\begin{bmatrix}
P_{11} & 0 \\
0 & T_{z}
\end{bmatrix}
\right)\,,
\]
where $(A_{11}, B_{11},P_{11})$ is an $\E$-unitary and $(T_{F_1^*+zF_2},T_{F_2^*+zF_1},T_z)$ is a pure $\E$-isometry. By the previous part of the proof, we have that $A_{11}, B_{11}$ are unitaries as $A_{11}B_{11}=P_{11}$, and $P_{11}$ is a unitary. Again, since $(A,B,P)=(P,Q,PQ)$, we have that $T_{F_1^*+zF_2}T_{F_2^*+zF_1}=T_{F_1^*+zF_2}T_{F_2^*+zF_1}=T_z$, which further gives
\[
F_1F_2=F_2F_1=0 \; \; \& \;\; F_1^*F_1+F_2F_2^*=F_1F_1^*+F_2^*F_2=I.
\]
With these operator identities it can be easily verified that $T_{F_1^*+zF_2}$ and $T_{F_2^*+zF_1}$ are isometries and the proof is complete.

\end{proof}

It is evident from the Wold-decomposition of an $\E$-isometry (see Theorem \ref{thm:ti}) that every $\E$-isomerty splits into two orthogonal parts of which one is an $\E$-unitary and the other is a pure $\E$-isometry. We already have a concrete model for a pure $\E$-isometry in Theorem \ref{modelthm1}, though here we are curious about the more specific class --- the pure triangular $\E$-isometries, which are described below more explicitly.

\begin{thm} \label{thm:model-triang}

A commuting triple of operators $(V_1,V_2,V)$ with $V=V_1V_2$, acting on $\HS$ is a pure triangular $\E$-isometry if and only if $\HS$ is unitarily equivalent to $H^2(\mathcal D_{V^*})$  by a unitary, say $U$ and there exist a unitary $W$ and a projection $Q$ in $\mathcal B(\mathcal D_{V^*})$ such that $(V_1,V_2,V)$ is unitarily equivalent to $(T_{Q^{\perp}W+zQW}, T_{W^*Q+zW^*Q^{\perp}} , T_z)$ by the same unitary $U$, where $Q^{\perp}=I-Q$.

\end{thm}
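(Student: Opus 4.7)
The natural strategy is to start from the known functional model for a pure $\mathbb E$-isometry (Theorem \ref{modelthm1}) and then to upgrade it by exploiting the extra algebraic constraint $V=V_1V_2$.

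For the forward direction, suppose $(V_1,V_2,V)$ is a pure triangular $\mathbb E$-isometry with $V=V_1V_2$. By Lemma \ref{lem:char-triang1}(b), $V_1,V_2$ are commuting isometries, and since $V$ is an isometric $C_{\cdot 0}$ contraction it is a pure isometry. Apply Theorem \ref{modelthm1} to obtain a unitary $U:\HS\to H^2(\mathcal D_{V^*})$ carrying $(V_1,V_2,V)$ to $(T_{G_1^*+zG_2},\,T_{G_2^*+zG_1},\,T_z)$, where $G_1,G_2$ are the (transplanted) fundamental operators. The plan is to extract $W$ and $Q$ directly from $G_1,G_2$. I expect to derive the following identities:
\begin{enumerate}
\item $G_1G_1^*+G_2^*G_2=I$ and $G_1G_2=0$ (the symbol $G_1^*+zG_2$ being inner, forced by $V_1$ an isometry);
\item $G_1^*G_1+G_2G_2^*=I$ and $G_2G_1=0$ (similarly, from $V_2$ an isometry);
\item $G_1^*G_2^*=0$ and the ``middle coefficient'' relation $G_1^*G_1+G_2G_2^*=I$, obtained by expanding $T_{G_1^*+zG_2}T_{G_2^*+zG_1}=T_z$.
\end{enumerate}

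With these in hand, the crucial step is to observe that $G_2$ (and symmetrically $G_1$) is a partial isometry: using $G_1^*G_1+G_2G_2^*=I$ together with $G_1G_2=0$ gives $G_2G_2^*G_2=(I-G_1^*G_1)G_2=G_2-G_1^*(G_1G_2)=G_2$. Consequently $Q:=G_2G_2^*$ is an orthogonal projection on $\mathcal D_{V^*}$ with complementary projection $Q^{\perp}=G_1^*G_1$. The right definition of the unitary is then $W:=G_1^*+G_2$: a short calculation using the four identities shows $W^*W=G_2^*G_2+G_1G_1^*=I$ and $WW^*=G_1^*G_1+G_2G_2^*=I$, while $QW=G_2G_2^*G_1^*+G_2G_2^*G_2=G_2$ (using $G_2^*G_1^*=0$ and partial-isometry-ness), $Q^{\perp}W=W-QW=G_1^*$, and analogously $W^*Q=G_2^*$, $W^*Q^{\perp}=G_1$. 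Substituting back, the two symbols become $G_1^*+zG_2=Q^{\perp}W+zQW$ and $G_2^*+zG_1=W^*Q+zW^*Q^{\perp}$, which is exactly the asserted form.

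The converse amounts to a direct verification. Given any unitary $W$ and projection $Q$ on a coefficient Hilbert space $\mathcal E$, set $\varphi(z)=Q^{\perp}W+zQW$ and $\psi(z)=W^*Q+zW^*Q^{\perp}$. Expanding $\varphi\psi$ and $\psi\varphi$ and using $QQ^{\perp}=0$ and $W^*W=WW^*=I$ gives $\varphi\psi=\psi\varphi=zI_{\mathcal E}$. Checking that $\varphi$ takes unitary values on the unit circle (hence is inner) is a similarly short computation, so $T_\varphi,T_\psi$ are commuting isometries on $H^2(\mathcal E)$ whose product is $T_z$, a pure isometry; Lemma \ref{lem:char-triang1}(b) then identifies $(T_\varphi,T_\psi,T_z)$ as a pure triangular $\mathbb E$-isometry with defect space $\mathcal E$.

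The main obstacle, in my reading, is the choice of the candidates $W=G_1^*+G_2$ and $Q=G_2G_2^*$: once one sees them, the whole computation collapses, but arriving at them requires combining the isometry relations with the ``product equals $T_z$'' relation to first deduce that $G_1,G_2$ are \emph{partial} isometries with mutually annihilating ranges — that is the algebraic heart of the argument.
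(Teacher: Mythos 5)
Your proposal is correct and follows essentially the same route as the paper: both invoke Theorem \ref{modelthm1}, extract the identities $G_1G_2=G_2G_1=0$ and $G_1^*G_1+G_2G_2^*=G_1G_1^*+G_2^*G_2=I$ from $V_1V_2=V_2V_1=V$, and take $W=G_1^*+G_2$. The only cosmetic difference is how the projection is found --- the paper sets $Q=\tfrac{1}{2}\bigl(I-(G_1^*-G_2)(G_1+G_2^*)\bigr)$ via the auxiliary unitaries $G_1^*\pm G_2$, which the identities reduce to exactly your $Q=G_2G_2^*$.
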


\begin{proof}

Evidently a commuting triple of Toeplitz operators $(T_{Q^{\perp}W+zQW}, T_{W^*Q+zW^*Q^{\perp}} , T_z)$ is a pure $\E$-isometry and it can be easily verified that the product of the first two components is equal to $T_z$, which proves that it is triangular.

Conversely, suppose $(V_1,V_2,V)$ is a pure triangular $\E$-isometry. Then, by Theorem \ref{modelthm1} we have that the spaces $\HS$ and $H^2(\mathcal D_{V^*})$ can be identified by a unitary say $U$ and that $(V_1,V_2,V)$ can be identified with $(T_{F_1^*+zF_2}, T_{F_2^*+zF_1} , T_z)$ by the same unitary $U$, where $F_1,F_2$ are the fundamental operators of $(V_1^*,V_2^*,V^*)$. Thus, without loss of generality let us assume that
\[
(V_1,V_2,V)=(T_{F_1^*+zF_2}, T_{F_2^*+zF_1} , T_z).
\]
The fact that $V_1V_2=V_2V_1=V$ gives
$F_1F_2=F_2F_1=0$ and $F_1^*F_1+F_2F_2^*=F_1F_1^*+F_2^*F_2=I$. Therefore, $F_1^*+zF_2 \in \mathcal B(\mathcal D_{V^*})$ is a unitary for any $z\in \T$. Let $F_1^*+F_2=U_1$ and $F_1^*-F_2=U_2$. Then, $F_1^*=\dfrac{U_1+U_2}{2}$ and $F_2=\dfrac{U_1-U_2}{2}$. Form $F_1F_2=F_2F_1=0$, we have $U_2U_1^*=U_1U_2^*$ and $U_1^*U_2=U_2^*U_1$. Set $Q=\dfrac{I-U_2U_1^*}{2}$. Then, $Q$ is self-adjoint and using $U_1^*U_2=U_2^*U_1$ we have that $Q^2=Q$. Thus, $Q$ is a projection and evidently
\[
F_1^*=\dfrac{U_1+U_2}{2}=(I-Q)U_1 \; \; \; \& \; \; \; F_2= \dfrac{U_1-U_2}{2}=QU_1.
\]
We choose to denote $W=U_1$ and have our desired conclusion. The proof is now complete.

\end{proof}


\subsection{Berger-Coburn-Lebow Theorem and factorization of a pure contraction}
It is clear from the above theorem that the unitary part of $(V_1, \dots , V_n)$ consists of commuting unitaries and the product of the isometries in the non-unitary part is a pure isometry. So, it suffices to have a model for the non-unitary part which indeed was constructed in \cite{B-C-Lb} by Berger, Coburn and Lebow. We recall the result and give a new proof based on the functional model for pure triangular $\E$-isometries as in Theorem \ref{thm:model-triang}.

\begin{thm} [Berger-Coburn-Lebow, \cite{B-C-Lb}] \label{thm:B-C-Lb}

Let $V_1, \dots , V_n$ be commuting isometries on $\HS$ such that $V=\prod_{i=1}^n V_i$ is a pure isometry. Then, there exist projections $P_1, \dots , P_n$ and unitaries $U_1, \dots , U_n$ in $\mathcal B(\mathcal D_{V^*})$ such that
\[
(V_1, \dots , V_n , V) \equiv (T_{P_1^{\perp}U_1+ zP_1U_1 }, \dots , T_{P_n^{\perp}U_n+zP_nU_n }, T_z)  \;\; \text{ on } \; \; H^2(\mathcal D_{V^*}).
\] 
Moreover, $U_i^*P_i^{\perp} , P_iU_i$ are the fundamental operators of the $\E$-contraction $(V_i^*,V_i'^*,V^*)$ for each $i$.

\end{thm}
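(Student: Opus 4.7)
The plan is to reduce the $n$-tuple statement to $n$ applications of the functional model for pure triangular $\E$-isometries (Theorem \ref{thm:model-triang}), glued together by a single Sz.-Nagy-Foias type identification of the pure isometry $V$. For each $i$, set $V_i' := \prod_{j\neq i} V_j$, so that $V_i V_i' = V_i' V_i = V$. Since $V_i$ and $V_i'$ are commuting isometries whose product is the isometry $V$, Lemma \ref{lem:char-triang1}(b) shows that $(V_i, V_i', V)$ is a triangular $\E$-isometry; because $V$ is pure, it is in fact a pure triangular $\E$-isometry.

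First I would fix, once and for all, a unitary $U: \HS \to H^2(\mathcal{D}_{V^*})$ coming from the Sz.-Nagy-Foias model of the pure isometry $V$, that is, with $U V U^* = T_z$. Applying Theorem \ref{thm:model-triang} to $(V_i, V_i', V)$ for each $i$ yields a projection $P_i$ and a unitary $U_i$ in $\mathcal{B}(\mathcal{D}_{V^*})$ such that, through this fixed $U$,
\[
U V_i U^* = T_{P_i^{\perp} U_i + z P_i U_i}, \qquad U V_i' U^* = T_{U_i^* P_i + z U_i^* P_i^{\perp}}, \qquad U V U^* = T_z.
\]
This is the desired representation of $(V_1,\ldots,V_n,V)$ on $H^2(\mathcal{D}_{V^*})$.

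For the ``moreover'' part, I would invoke Theorem \ref{modelthm1}: in a model $(T_{F_1^*+zF_2},T_{F_2^*+zF_1},T_z)$ of a pure $\E$-isometry, the operators $F_1, F_2$ are precisely the fundamental operators of the adjoint triple. Applied to the pure $\E$-isometry $(V_i, V_i', V)$, the resulting $F_1, F_2$ are the fundamental operators of $(V_i^*, V_i'^*, V^*)$. Matching the coefficients of the symbol of $V_i$,
\[
P_i^{\perp} U_i + z P_i U_i \;=\; F_1^* + z F_2,
\]
gives $F_1 = U_i^* P_i^{\perp}$ and $F_2 = P_i U_i$, which is exactly the claim.

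The main obstacle, and really the only delicate point, is ensuring that the unitary $U$ produced by Theorem \ref{thm:model-triang} can be chosen uniformly in $i$. This is handled by the observation that the intertwining unitary in Theorem \ref{thm:model-triang} is exactly the one identifying $V$ with $T_z$, and this identification depends only on $V$, not on the particular splitting $V = V_i V_i'$. Equivalently, once $U V U^* = T_z$ is fixed, every $U V_i U^*$ commutes with $T_z$ and is therefore an analytic Toeplitz operator; the content of Theorem \ref{thm:model-triang} is then that its symbol necessarily takes the specific form $P_i^{\perp} U_i + z P_i U_i$ with $P_i$ a projection and $U_i$ a unitary, and the computation of its fundamental operators follows from Theorem \ref{modelthm1} as above.
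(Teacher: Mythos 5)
Your proposal is correct and follows essentially the same route as the paper: form the triples $(V_i,V_i',V)$ with $V_i'=\prod_{j\neq i}V_j$, observe that the model unitary $U$ of Theorem \ref{thm:model-triang} (equivalently Theorem \ref{modelthm1}) is determined by the pure isometry $V$ alone and hence can be chosen uniformly in $i$, and then read off the projections, unitaries and fundamental operators from the symbol $P_i^{\perp}U_i+zP_iU_i$. Your explicit coefficient-matching for the ``moreover'' clause and the remark that each $UV_iU^*$ lies in the commutant of $T_z$ are welcome elaborations of points the paper leaves implicit, but the argument is the same.
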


\begin{proof}
Here we are providing just an alternative and shorter proof. Evidently $(V_i, V_i', V)$ is a pure triangular $\E$-isometry for each $i$, where $V_i'=\prod_{j \neq i}V_j$. If we follow the proof of Theorem \ref{modelthm1} from \cite{S:P-tetra1}, we see that the unitary $U$ (as in Theorem \ref{modelthm1}) is unique so that $\hat T_3= U^*T_zU$ and that $(\hat{T}_1, \hat T_2 , \hat T_3)\equiv (T_{A_1^*+zA_2}, T_{A_2^*+zA_1}, T_z)$ by $U$, where $A_1,A_2$ are the fundamental operators of $(\hat T_1^*, \hat T_2^*, \hat T_3^*)$. Note that the third component i.e. $V$ is held fixed in all commuting triples $(V_i, V_i',V)$. Thus, there is a unitary $U:\HS \rightarrow H^2(\mathcal D_{V^*})$ such that
\[
(V_1, \dots , V_n , V_1', \dots , V_n', V) \equiv (T_{A_1^*+zB_1},\dots , T_{A_n^*+zB_n}, T_{B_1^*+zA_1}, \dots , T_{B_n^*+zA_n},  T_z)
\]
by the unitary $U$, where $A_i, B_i$ are the fundamental operators of $(V_i^*,V_i'^*,V^*)$ for $i=1, \dots , n$. By an application of Theorem \ref{thm:model-triang}, we arrive at the desired conclusion.
\end{proof}

We are now going to prove that every pure triangular $\E$-contraction dilates to a pure triangular $\E$-isometry and this is one of the pillars of this paper. Before that we recall the definitions of $\E$-isometric and $\E$-unitary dilations of an $\E$-contraction followed by a few useful results from the literature.

\begin{defn}
Let $(T_1,T_2,T_3)$ be a $\mathbb E$-contraction on $\mathcal H$.
A commuting tuple $(Q_1,Q_2,V)$ on $\mathcal K$ is said to be an
$\mathbb E$-isometric (or $\E$-unitary) dilation of $(T_1,T_2,T_3)$ if $\mathcal H
\subseteq \mathcal K$, $(Q_1,Q_2,V)$ is an $\mathbb E$-isometry (or $\E$-unitary)
and
$$ P_{\mathcal H}(Q_1^{m_1}Q_2^{m_2}V^n)|_{\mathcal H}=T_1^{m_1}T_2^{m_2}T_3^n,
\; \textup{ for all non-negative integers }m_1,m_2,n.
$$ Here $P_{\mathcal H}:\mathcal K \rightarrow \mathcal H$
is the orthogonal projection of $\mathcal K$ onto $\mathcal H$.
Moreover, the dilation is called {\em minimal} if
\[
\mathcal K=\overline{\textup{span}}\{ Q_1^{m_1}Q_2^{m_2}V^n h\,:\;
h\in\mathcal H \textup{ and }m_1,m_2,n\in \mathbb N \cup \{0\} \; \;(\text{ or } \in \mathbb Z \text{ respectively})\}.
\]
\end{defn}

\begin{prop} [\cite{S:P-tetra1}, Proposition 4.3] \label{prop:minimal-dil}
If an $\E$-contraction has an $\E$-isometric dilation, then it has a minimal $\E$-isometric dilation.
\end{prop}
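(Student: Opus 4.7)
The plan is to take an arbitrary $\E$-isometric dilation and cut it down to the smallest jointly invariant subspace containing $\HS$. Concretely, starting with any $\E$-isometric dilation $(Q_1,Q_2,V)$ on $\mathcal K \supseteq \HS$ of the $\E$-contraction $(T_1,T_2,T_3)$, I would set
\[
\mathcal K_0 := \overline{\textup{span}}\{ Q_1^{m_1}Q_2^{m_2}V^n h : h\in \HS,\, m_1,m_2,n\in \mathbb N \cup \{0\}\},
\]
which is precisely the subspace that appears in the minimality condition. The first step is automatic: $\HS \subseteq \mathcal K_0$ (take $m_1=m_2=n=0$), and applying $Q_1$, $Q_2$ or $V$ to a spanning element produces another spanning element, so $\mathcal K_0$ is jointly invariant under the three operators.

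Writing $(\tilde Q_1, \tilde Q_2, \tilde V)$ for the restrictions of $Q_1,Q_2,V$ to $\mathcal K_0$, the substantive step is to check that this commuting triple is again an $\E$-isometry. For any holomorphic polynomial $p \in \C[z_1,z_2,z_3]$, joint invariance yields $p(\tilde Q_1, \tilde Q_2, \tilde V) = p(Q_1,Q_2,V)|_{\mathcal K_0}$, and hence
\[
\|p(\tilde Q_1, \tilde Q_2, \tilde V)\| \leq \|p(Q_1,Q_2,V)\| \leq \|p\|_{\infty, \ov{\E}}.
\]
Since $\ov{\E}$ is polynomially convex, Lemma \ref{lem:23} upgrades this polynomial bound to the full spectral-set property, so $(\tilde Q_1, \tilde Q_2, \tilde V)$ is an $\E$-contraction. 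The operator $\tilde V$ is manifestly an isometry (a restriction of an isometry to an invariant subspace is again an isometry), so part (2) of Theorem \ref{thm:ti} finally gives that $(\tilde Q_1, \tilde Q_2, \tilde V)$ is an $\E$-isometry.

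The remaining verifications are routine. Because $\HS \subseteq \mathcal K_0$, the orthogonal projection $P_\HS$ restricted to $\mathcal K_0$ coincides with the projection within $\mathcal K_0$ onto $\HS$, so for all $m_1,m_2,n \geq 0$,
\[
P_{\HS}(\tilde Q_1^{m_1}\tilde Q_2^{m_2}\tilde V^n)|_{\HS} = P_{\HS}(Q_1^{m_1}Q_2^{m_2}V^n)|_{\HS}= T_1^{m_1}T_2^{m_2}T_3^n,
\]
so $(\tilde Q_1, \tilde Q_2, \tilde V)$ is an $\E$-isometric dilation of $(T_1, T_2, T_3)$, and minimality is immediate from the very definition of $\mathcal K_0$. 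The only genuinely non-trivial point I anticipate is the preservation of the spectral-set condition under restriction to a jointly invariant subspace; once Lemma \ref{lem:23} converts this into a statement about polynomial norm bounds, the rest of the argument is essentially bookkeeping.
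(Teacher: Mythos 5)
Your argument is correct and is essentially the argument of the cited source: form the closed span $\mathcal K_0$ of $\{Q_1^{m_1}Q_2^{m_2}V^n h\}$, observe joint invariance, restrict, and check the dilation and minimality properties. The only (cosmetic) difference is that you verify the restricted triple is again an $\E$-isometry by passing the polynomial von Neumann inequality to the invariant subspace and invoking Lemma \ref{lem:23} together with part (2) of Theorem \ref{thm:ti}, whereas one can get this in one line from the definition of an $\E$-isometry as the restriction of an $\E$-unitary to a joint invariant subspace; both routes are valid.
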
 

\begin{prop} [\cite{S:P-tetra1}, Proposition 4.4] \label{prop:dilation-extn}
Let $(A,B,P)$ on $\HS$ be an $\E$-contraction. Then $(Y_1,Y_2,Y)$ on $\mathcal K$ is a minimal $\E$-isometric dilation of $(A,B,P)$ if and only if $(Y_1^*,Y_2^*,Y^*)$ is an $\E$-co-isometric extension of $(A,B,P)$.
\end{prop}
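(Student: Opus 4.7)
This proposition is a three-variable analogue of the classical Sz.-Nagy--Foias duality between a minimal isometric dilation and the associated co-isometric extension. I would prove the two implications separately, using throughout the polynomial form of the dilation identity
\[
P_{\HS}\,p(Y_1,Y_2,Y)|_{\HS}=p(A,B,P),\qquad p\in\C[z_1,z_2,z_3],
\]
together with the commutativity of $(Y_1,Y_2,Y)$.

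For the forward direction (minimal dilation $\Rightarrow$ co-isometric extension), by the dilation identity every vector $p(Y_1,Y_2,Y)h$ with $h\in\HS$ decomposes as $p(A,B,P)h\in\HS$ plus its component in $\mathcal K\ominus\HS$, so by minimality $\mathcal K\ominus\HS$ is the closed span of vectors $p(Y_1,Y_2,Y)h-p(A,B,P)h$. Fixing $h'\in\HS$ and, say, $j=1$, I would split the pairing
\[
\langle Y_1^*h',\,p(Y_1,Y_2,Y)h-p(A,B,P)h\rangle
\]
into two terms. The first is $\langle h',(z_1p)(Y_1,Y_2,Y)h\rangle$, which by the dilation identity applied to $z_1p$ equals $\langle h',(z_1p)(A,B,P)h\rangle$. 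The second is $\langle h',Y_1\xi\rangle$ with $\xi=p(A,B,P)h\in\HS$, and since $h',\xi\in\HS$ this equals $\langle h',P_{\HS}Y_1\xi\rangle_{\HS}=\langle h',A\xi\rangle=\langle h',(z_1p)(A,B,P)h\rangle$ via the dilation identity for the monomial $z_1$. The two contributions cancel, forcing $Y_1^*h'\perp(\mathcal K\ominus\HS)$ and hence $Y_1^*h'\in\HS$. A direct inner-product computation $\langle Y_1^*|_{\HS}h',h''\rangle_{\HS}=\langle h',P_{\HS}Y_1h''\rangle_{\HS}=\langle h',Ah''\rangle_{\HS}$ identifies $Y_1^*|_{\HS}=A^*$, and the same argument applied with $j=2$ and with $Y$ in place of $Y_1$ produces $Y_2^*|_{\HS}=B^*$ and $Y^*|_{\HS}=P^*$.

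For the converse direction, assume $\HS$ is invariant under each of $Y_1^*,Y_2^*,Y^*$ with those restrictions equal to $A^*,B^*,P^*$. Then for $h,h'\in\HS$ and $m_1,m_2,n\geq0$,
\[
\langle Y_1^{m_1}Y_2^{m_2}Y^nh,h'\rangle=\langle h,Y^{*n}Y_2^{*m_2}Y_1^{*m_1}h'\rangle=\langle h,P^{*n}B^{*m_2}A^{*m_1}h'\rangle=\langle A^{m_1}B^{m_2}P^nh,h'\rangle,
\]
which is precisely the dilation identity. Minimality of the resulting dilation is either built into the convention for an $\E$-co-isometric extension or can be recovered by restricting $(Y_1,Y_2,Y)$ to the common invariant subspace $\mathcal K_0=\bigvee_{m_1,m_2,n\geq 0}Y_1^{m_1}Y_2^{m_2}Y^n\HS$; the restriction of an $\E$-isometry to a joint invariant subspace is again an $\E$-isometry (pull back the defining $\E$-unitary extension), yielding a minimal $\E$-isometric dilation on $\mathcal K_0$. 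The main obstacle is the cancellation step in the forward direction: one has to describe $\mathcal K\ominus\HS$ cleanly through the polynomial representation and apply the dilation identity to both $p$ and the shifted polynomial $z_jp$ without circularity. Once that bookkeeping is in place, the proof collapses to the two one-line calculations above.
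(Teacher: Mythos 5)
Your argument is correct and is essentially the same duality argument as in the cited source (\cite{S:P-tetra1}, Proposition 4.4): the present paper only quotes the result, and the original proof likewise pairs $Y_j^*h'$ against the spanning vectors $p(Y_1,Y_2,Y)h$ and uses the dilation identity for $p$ and $z_jp$ to get $Y_j^*h'=$ (adjoint of the compressed operator)$h'$, with the converse being the one-line adjoint computation. You are also right to flag that the converse only yields a dilation, with minimality recovered by passing to $\bigvee_{m_1,m_2,n\ge 0}Y_1^{m_1}Y_2^{m_2}Y^n\HS$ (or read into the definition of co-isometric extension), which is exactly how the cited statement is to be understood.
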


\begin{thm}[\cite{S:P-tetra2}, Theorem 3.2] \label{thm:pure-E-dilation}
Let $(A,B,P)$ be a pure $\E$-contraction on a Hilbert space $\HS$ and let the fundamental operators $G_1,G_2$ of $(A^*,B^*,P^*)$ satisfy $[G_1,G_2]=0$ and $[G_1^*,G_1]=[G_2^*,G_2]$. Then $(R_1,R_2,V)$ acting on $\mathcal K = H^2 \otimes \mathcal D_{P^*}$, where
\[
R_1  = I \otimes G_1^* + M_z \otimes G_2 ,\; R_2  =I \otimes G_2^* + M_z \otimes G_1 , \; V = M_z \otimes I,
\]
is a minimal pure $\E$-isometric dilation of $(A,B,P)$.
\end{thm}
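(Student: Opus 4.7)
The plan is to proceed in three stages: (i) verify that $(R_1,R_2,V)$ is itself a pure $\E$-isometry, (ii) construct an isometric intertwining embedding of $\HS$ into $\mathcal K$ using the Sz.-Nagy--Foias model for $P$, and (iii) deduce minimality.

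For (i), I would identify $H^2\otimes\mathcal D_{P^*}$ with $H^2(\mathcal D_{P^*})$ in the natural way; under this identification $R_1,R_2,V$ become the Toeplitz operators $T_{G_1^*+zG_2}$, $T_{G_2^*+zG_1}$, $T_z$. The two hypotheses on $G_1,G_2$ match conditions (1) and (2) of Theorem \ref{modelthm1}. Condition (3), the contractivity $\|G_1^*+zG_2\|\le 1$ for $z\in\D$, is a known property of fundamental operators of an $\E$-contraction, established in \cite{S:P-tetra1}. So Theorem \ref{modelthm1} applies and $(R_1,R_2,V)$ is a pure $\E$-isometry.

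For (ii), since $P$ is pure, the canonical Sz.-Nagy--Foias isometry
\[
\Pi_{P^*}:\HS\to H^2\otimes\mathcal D_{P^*}, \q \Pi_{P^*} h=\sum_{n=0}^{\infty} z^n\otimes D_{P^*} P^{*n} h,
\]
intertwines $P^*$ with $V^*$. The crux of the proof is to verify the two further intertwinings $\Pi_{P^*} A^*=R_1^* \Pi_{P^*}$ and $\Pi_{P^*} B^*=R_2^* \Pi_{P^*}$. Writing $R_1^*=I\otimes G_1 + M_z^*\otimes G_2^*$, expanding both sides coefficient-wise in the basis $\{z^n\}_{n\ge 0}$, and using $A^*P^{*n}=P^{*n}A^*$, these collapse to a single operator identity on $\HS$,
\[
D_{P^*} A^* = G_1 D_{P^*}+G_2^* D_{P^*} P^*,
\]
together with its symmetric counterpart obtained by swapping $A\leftrightarrow B$ and $G_1 \leftrightarrow G_2$. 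This identity is in turn extracted from the fundamental equation $A^*-BP^*=D_{P^*}G_1 D_{P^*}$ and the adjoint of its mate $B-PA^*=D_{P^*}G_2^* D_{P^*}$, using $D_{P^*}^2=I-PP^*$, the commutativity of the triple, and the Sz.-Nagy intertwining $D_{P^*}P^*=P^*D_P$ to cancel the spurious $PP^*$ term arising from $D_{P^*}^2$.

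For (iii), once the intertwinings are in hand, $\Pi_{P^*}\HS$ is a joint invariant subspace of $(R_1^*,R_2^*,V^*)$ on which the restrictions are unitarily equivalent to $(A^*,B^*,P^*)$. By Proposition \ref{prop:dilation-extn}, $(R_1,R_2,V)$ is then an $\E$-isometric dilation of $(A,B,P)$. Minimality reduces to the minimality of $V=M_z\otimes I$ as the isometric dilation of $P$, which is standard: the wandering subspace $\{1\}\otimes\mathcal D_{P^*}$ is already contained in $\overline{\textup{span}}\{\Pi_{P^*} h - V\Pi_{P^*} P^* h : h\in\HS\}$, so the $V$-orbit of $\Pi_{P^*}\HS$ already exhausts $\mathcal K$. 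The main obstacle in this plan is the operator identity in stage (ii); although the fundamental equations clearly encode the needed information, algebraically disentangling them to obtain $D_{P^*} A^* = G_1 D_{P^*}+G_2^* D_{P^*} P^*$ cleanly requires careful use of commutativity of the triple and delicate handling of the fact that $D_{P^*}^2$ is the projection-like operator $I-PP^*$ rather than the identity on $\HS$.
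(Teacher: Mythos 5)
Your proposal is correct and is essentially the proof given in the cited source \cite{S:P-tetra2} (the present paper imports the theorem without reproving it): the Sz.-Nagy--Foias embedding $h\mapsto\sum_{n\ge 0} z^n\otimes D_{P^*}{P^*}^{n}h$, the intertwining identity $D_{P^*}A^*=G_1D_{P^*}+G_2^*D_{P^*}P^*$ extracted from the fundamental equations (cleanest by left-multiplying by $D_{P^*}$ and using that $D_{P^*}$ is injective on $\mathcal D_{P^*}$), and the wandering-subspace argument for minimality are all as in the original. One small correction: the bound $\|G_1^*+zG_2\|\le 1$ is \emph{not} a general property of fundamental operators --- only the numerical-radius bound $\omega(G_1^*+zG_2)\le 1$ holds in general; here the norm bound follows from the theorem's own hypotheses, since $[G_1,G_2]=0$ and $[G_1^*,G_1]=[G_2^*,G_2]$ make $G_1^*+zG_2$ normal for $z\in\T$, exactly as the paper argues inside the proof of Theorem \ref{thm:E-unitary dilation}.
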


\begin{thm} \label{thm:triang-dilation}

Every triangular $\E$-contraction dilates to a triangular $\E$-isometry and every pure triangular $\E$-contraction dilates to a pure triangular $\E$-isometry.

\end{thm}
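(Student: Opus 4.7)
The plan is to reduce the construction to Ando's classical commuting isometric dilation theorem, followed by a Wold-type splitting to obtain the pure refinement. For the first statement, given a triangular $\E$-contraction $(P,Q,PQ)$ on $\HS$, I would apply Ando's theorem to the commuting contractions $P,Q$ to obtain a Hilbert space $\mathcal K \supseteq \HS$ and commuting isometries $V_1, V_2$ on $\mathcal K$ such that $\HS$ is co-invariant under each $V_i$ with $V_1^*|_\HS = P^*$ and $V_2^*|_\HS = Q^*$; in particular, $P_\HS V_1^{m_1} V_2^{m_2}|_\HS = P^{m_1} Q^{m_2}$. By Lemma \ref{lem:char-triang1}(b), the triple $(V_1, V_2, V_1 V_2)$ is a triangular $\E$-isometry, and commutativity of $V_1, V_2$ gives $V_1^{m_1} V_2^{m_2} (V_1 V_2)^n = V_1^{m_1+n} V_2^{m_2+n}$, whence the full dilation identities $P_\HS V_1^{m_1} V_2^{m_2} (V_1V_2)^n|_\HS = P^{m_1} Q^{m_2} (PQ)^n$ follow immediately.

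For the pure refinement, set $V := V_1 V_2$ and take its Wold decomposition $\mathcal K = \mathcal K_u \oplus \mathcal K_s$, where $\mathcal K_u = \bigcap_{n \geq 0} V^n \mathcal K$ is the unitary part. Since each $V_i$ commutes with $V$, one has $V_i \mathcal K_u \subseteq \mathcal K_u$; then $V_1|_{\mathcal K_u}$ and $V_2|_{\mathcal K_u}$ are commuting isometries whose product $V|_{\mathcal K_u}$ is a unitary, so Lemma \ref{lem:E-1} forces both restrictions to be unitaries on $\mathcal K_u$. Consequently $\mathcal K_u$, and therefore $\mathcal K_s$, both reduce $V_1$ and $V_2$, and by Lemma \ref{lem:char-triang1}(b) the restricted triple $(V_1|_{\mathcal K_s}, V_2|_{\mathcal K_s}, V|_{\mathcal K_s})$ is a pure triangular $\E$-isometry.

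The crux --- and the main obstacle --- is to show $\HS \subseteq \mathcal K_s$, which is where the $C._0$ hypothesis on $PQ$ enters decisively. Using co-invariance of $\HS$ under $V$, one has $V^{*n}|_\HS = (PQ)^{*n}$, which tends to $0$ strongly by purity. For any $h \in \HS$ decomposed as $h = h_u + h_s$ along $\mathcal K_u \oplus \mathcal K_s$, the unitarity of $V|_{\mathcal K_u}$ yields $\|V^{*n} h\|^2 \geq \|V^{*n} h_u\|^2 = \|h_u\|^2$, and sending $n \to \infty$ forces $h_u = 0$. Once $\HS \subseteq \mathcal K_s$ is established, the dilation identities for $(P, Q, PQ)$ transfer verbatim to the restricted triple on $\mathcal K_s$, completing the proof. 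I expect no difficulty in the first statement; the technical heart of the argument is the purity transfer step, for which the interplay between the Wold decomposition of $V_1V_2$ and Lemma \ref{lem:E-1} is essential.
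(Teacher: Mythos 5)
Your proof is correct, and its skeleton --- Ando's dilation, a Wold-type splitting, and the $C._0$ hypothesis forcing $\HS$ into the pure part --- is the same as the paper's. The differences lie in how the splitting is produced and how purity is exploited. The paper first passes to the minimal $\E$-isometric dilation via Proposition \ref{prop:minimal-dil}, converts it into an $\E$-co-isometric extension via Proposition \ref{prop:dilation-extn}, and then invokes the Wold decomposition of the $\E$-isometry from Theorem \ref{thm:ti}. You instead take the scalar Wold decomposition of $V=V_1V_2$ on the full Ando space and show by hand, using commutativity and Lemma \ref{lem:E-1}, that the unitary summand $\mathcal K_u$ jointly reduces $V_1$ and $V_2$; this bypasses the two propositions entirely, though you are implicitly relying on the fact that Ando's construction makes $\HS$ co-invariant with $V_i^*|_{\HS}=T_i^*$, which is precisely the information Proposition \ref{prop:dilation-extn} supplies after minimality. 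Your purity step is actually tighter than the paper's: decomposing an arbitrary $h\in\HS$ as $h_u+h_s$ and using $\|V^{*n}h\|\geq \|h_u\|$ handles the general case cleanly, whereas the paper asserts the existence of a vector $h\in\HS\cap\mathcal K_1$, which does not literally follow from $\HS\not\subseteq\mathcal K_2$; your version repairs that small imprecision. Both routes yield the same theorem; yours is marginally more self-contained and more careful at the purity step.
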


\begin{proof}

Suppose $(T_1,T_2,T_1T_2)$ on $\HS$ be a triangular $\E$-contraction for a pair of commuting contractions $T_1,T_2 \in \mathcal B(\HS)$. Suppose $(V_1,V_2)$ on $\mathcal K \supseteq \HS$ be Ando's isometric dilation of $(T_1,T_2)$. Clearly the commuting triple $(V_1,V_2,V_1V_2)$ dilates $(T_1,T_2,T_1T_2)$ and by Lemma \ref{lem:char-triang1}, $(V_1, V_2,V_1V_2)$ is a triangular $\E$-isometry.

Now suppose $(T_1,T_2,T)$, where $T=T_1T_2$, is a pure triangular $\E$-contraction. Then $T$ is a pure contraction and following the argument as above $(V_1,V_2,V)$ on $\mathcal K$ is a triangular $\E$-isometric dilation of $(T_1,T_2,T)$, where $V=V_1V_2$. Then by Proposition \ref{prop:minimal-dil}, there is a Hilbert space $\widetilde{\mathcal K} \subseteq \mathcal K$ such that $\widetilde{\mathcal K}$ is a joint-invariant subspace of $(V_1,V_2,V)$ and that $(V_1|_{\widetilde{\mathcal K}}, V_2|_{\widetilde{\mathcal K}}, V|_{\widetilde{\mathcal K}})$ is a minimal triangular $\E$-isometric dilation of $(T_1,T_2,T)$. If we denote $(\widetilde{V}_1, \widetilde{V}_2, \widetilde{V})= (V_1|_{\widetilde{\mathcal K}}, V_2|_{\widetilde{\mathcal K}}, V|_{\widetilde{\mathcal K}})$, it follows from Proposition \ref{prop:dilation-extn} that $(\widetilde{V}_1^*, \widetilde{V}_2^*, \widetilde{V}^*)$ is an $\E$-co-isometric extension of $(T_1^*,T_2^*,T^*)$. By Theorem \ref{thm:ti}, $(\widetilde{V}_1, \widetilde{V}_2, \widetilde{V})$ admits a Wold decomposition say $(U_1 \oplus X_1 , U_2 \oplus X_2 , U \oplus X)$ with respect to the orthogonal decomposition $\widetilde{\mathcal K} = \mathcal K_1 \oplus \mathcal K_2$, where $(U_1,U_2,U)$ on $\mathcal K_1$ is a triangular $\E$-unitary and $(X_1,X_2,X)$ on $\mathcal K_2$ is a pure triangular $\E$-isometry. We show that $\HS \subseteq \mathcal K_2$. Then it will follow that $(X_1^*,X_2^*,X^*)$ is an $\E$-co-isometric extension of $(T_1^*,T_2^*,T^*)$ which further implies that $(X_1,X_2,X)$ is a triangular $\E$-isometric dilation of $(T_1,T_2,T)$. If $\HS$ is not a subspace of $\mathcal K_2$, then there is $h\in \HS \cap \mathcal K_1$ such that $T^*h=U^*h$. Now the purity of $T$ implies that ${T^*}^nh \rightarrow 0$ as $n \rightarrow \infty$. Then it follows that ${U^*}^nh \rightarrow 0$ as $n \rightarrow \infty$, which is a contradiction. Hence $\HS \subseteq \mathcal K$ and the proof is complete.

\end{proof}

A pair of commuting contractions $(T_1,T_2)$ is said to be a \textit{pure pair} if their product $T_1T_2$ is a pure contraction. As a straight consequence of Theorem \ref{thm:triang-dilation}, we obtain the following corollary.

\begin{cor}
Every pure pair of contractions $(T_1,T_2)$ dilates to a pure pair of commuting isometries $(V_1,V_2)$.
\end{cor}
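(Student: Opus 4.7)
The plan is to read this off directly from Theorem \ref{thm:triang-dilation} together with Lemma \ref{lem:char-triang1} and Lemma \ref{lem:triangular1}. The point of the corollary is essentially a translation: a pure pair of contractions is exactly what the paper calls the ``first two slots'' of a pure triangular $\E$-contraction, and a pure pair of commuting isometries is exactly the ``first two slots'' of a pure triangular $\E$-isometry.

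First I would observe that given a pure pair $(T_1,T_2)$, by definition $T_1,T_2$ commute and $T=T_1T_2$ is a pure contraction. By Lemma \ref{lem:triangular1}, the commuting triple $(T_1,T_2,T_1T_2)$ is an $\E$-contraction, and since its third component $T_1T_2$ is a $C._0$ contraction, $(T_1,T_2,T_1T_2)$ is in fact a pure triangular $\E$-contraction in the sense of the definition introduced in Section \ref{sec:3}.

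Next I would invoke Theorem \ref{thm:triang-dilation} to produce a pure triangular $\E$-isometric dilation $(V_1,V_2,V)$ on some $\mathcal K \supseteq \HS$. By the definition of a triangular $\E$-isometry, $V=V_1V_2$, and by Lemma \ref{lem:char-triang1}(b) the operators $V_1,V_2$ are commuting isometries. Moreover, purity of the dilation means that $V=V_1V_2$ is a pure isometry, so $(V_1,V_2)$ is by definition a pure pair of commuting isometries.

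Finally, it remains to note that the dilation property for triples, namely
\[
P_\HS V_1^{m_1}V_2^{m_2}V^n|_\HS = T_1^{m_1}T_2^{m_2}T^n
\]
for all $m_1,m_2,n\geq 0$, specializes upon taking $n=0$ to the usual dilation identity $P_\HS V_1^{m_1}V_2^{m_2}|_\HS = T_1^{m_1}T_2^{m_2}$, so $(V_1,V_2)$ is a commuting isometric dilation of $(T_1,T_2)$ in the standard sense. There is no genuine obstacle here; the only small thing to verify is that the two notions of ``dilation'' match, which is immediate by setting the exponent of the product equal to zero. The corollary is therefore an essentially formal consequence of Theorem \ref{thm:triang-dilation} combined with the characterization of triangular $\E$-isometries in Lemma \ref{lem:char-triang1}(b).
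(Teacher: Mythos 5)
Your proposal is correct and is exactly the argument the paper intends: the corollary is stated as a ``straight consequence'' of Theorem \ref{thm:triang-dilation}, and your translation back and forth between pure pairs and pure triangular $\E$-contractions/isometries via Lemma \ref{lem:triangular1} and Lemma \ref{lem:char-triang1}(b), together with specializing the dilation identity at $n=0$, is the same (implicit) route. Nothing is missing.
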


Now we present an alternative proof to the well-known result from \cite{D:S:S} which provides factorization of a pure contraction.

\begin{thm} \label{thm:factor}
Let $T$ be a pure contraction on a Hilbert space $\HS$. Then $T=T_1T_2$ for a pair of commuting contractions $T_1,T_2 \in \mathcal B(\HS)$ if and only if there exist a Hilbert space $E$, a projection $P$ and a unitary $U$ in $\mathcal B(E)$ such that $\HS \subseteq H^2(E)$ is a common invariant subspace for $(M_{\Phi}^*,M_{\Psi}^*)$ and the following identities hold:
\begin{itemize}

\item[(i)] $P_{\HS} M_{\Phi \Psi}|_{\HS}= P_{\HS} M_{\Psi \Phi}|_{\HS} = P_{\HS} M_{z}|_{\HS}$

\item[(ii)] $(T_1,T_2) \equiv (P_{\HS} M_{\Phi}|_{\HS}, P_{\HS} M_{\Psi}|_{\HS})$,

\end{itemize}
where $\Phi(z)=(P+zP^{\perp})U^*$ and $\Psi(z)=U(P^{\perp}+zP)$.

\end{thm}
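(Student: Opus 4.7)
The plan is to derive the factorization theorem as a direct corollary of the pure triangular $\E$-dilation theorem (Theorem \ref{thm:triang-dilation}) together with the functional model for pure triangular $\E$-isometries (Theorem \ref{thm:model-triang}), bridged by Proposition \ref{prop:dilation-extn}. The crucial algebraic identity underpinning both directions is $\Phi(z)\Psi(z) = \Psi(z)\Phi(z) = zI_E$, which follows from an immediate expansion $(P+zP^\perp)(P^\perp+zP) = zI_E$ after the inner $U^*U$ cancels; as a consequence $M_{\Phi\Psi} = M_{\Psi\Phi} = M_z$ on $H^2(E)$, so condition (i) is automatic once $\Phi$ and $\Psi$ are defined by the stated formulas.

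For the forward direction I would proceed as follows. Given $T = T_1T_2$ with $T_1, T_2$ commuting contractions on $\HS$ and $T$ pure, the triple $(T_1, T_2, T)$ is a pure triangular $\E$-contraction by Lemma \ref{lem:triangular1}. Theorem \ref{thm:triang-dilation} then produces a minimal pure triangular $\E$-isometric dilation $(V_1, V_2, V)$ on some $\mathcal K \supseteq \HS$ with $V = V_1V_2$, and Proposition \ref{prop:dilation-extn} upgrades it to an $\E$-co-isometric extension of $(T_1^*, T_2^*, T^*)$, so that $\HS$ becomes invariant under $V_1^*, V_2^*, V^*$ and $T_i = P_\HS V_i|_\HS$. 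Applying Theorem \ref{thm:model-triang} to $(V_1, V_2, V)$ furnishes a unitary $\Lambda : \mathcal K \to H^2(\mathcal D_{V^*})$, a projection $Q$, and a unitary $W$ on $\mathcal D_{V^*}$ so that, under $\Lambda$, $(V_1, V_2, V)$ becomes multiplication by the symbols $Q^\perp W + zQW$, $W^*Q + zW^*Q^\perp$, and $z$. Taking $E = \mathcal D_{V^*}$, $P = Q^\perp$, and $U = W^*$ converts these symbols into exactly the stated $\Phi$ and $\Psi$, and transporting $\HS$ via $\Lambda$ gives the desired embedding with $(T_1, T_2)$ realised as the claimed compressions.

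For the reverse direction, the contractivity of $M_\Phi$ and $M_\Psi$ (since $\|\Phi(z)\|, \|\Psi(z)\| \le 1$ on $\overline{\D}$) combined with the assumed invariance of $\HS$ under $(M_\Phi^*, M_\Psi^*)$ allows a routine semi-invariant compression computation yielding $T_1T_2 = P_\HS M_{\Phi\Psi}|_\HS$ and $T_2T_1 = P_\HS M_{\Psi\Phi}|_\HS$; the key identity above then forces $T_1T_2 = T_2T_1 = P_\HS M_z|_\HS$, and this common value is a pure contraction because $M_z$ is a pure shift on $H^2(E)$ and $\HS$ inherits $M_z^*$-invariance from $(M_\Phi M_\Psi)^* = M_z^*$. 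The main obstacle is the forward-direction appeal to Theorem \ref{thm:triang-dilation}, which guarantees the existence of a pure \emph{triangular} $\E$-isometric dilation rather than merely an Ando dilation of $(T_1, T_2)$: this is precisely what makes the explicit symbols of Theorem \ref{thm:model-triang} applicable, after which the whole argument reduces to the algebraic renaming $(Q, W) \leftrightarrow (P^\perp, U^*)$.
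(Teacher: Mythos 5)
Your proposal is correct and takes essentially the same route as the paper: the paper's (very terse) proof likewise reduces the forward direction to Theorem \ref{thm:triang-dilation} (pure triangular $\E$-isometric dilation, upgraded to a co-isometric extension via Proposition \ref{prop:dilation-extn}) followed by the functional model of Theorem \ref{thm:model-triang}, with the same renaming $(Q,W)\leftrightarrow(P^{\perp},U^{*})$, and dismisses the converse as trivial. Your write-up simply makes explicit the steps the paper leaves implicit, including the identity $\Phi(z)\Psi(z)=\Psi(z)\Phi(z)=zI_{E}$.
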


\begin{proof}

Suppose $T=T_1T_2$ for a pair of commuting contractions $T_1,T_2 \in \mathcal B(\HS)$. Then $(T_1, T_2, T)$ is a triangular $\E$-contraction. The desired conclusion follows from Theorem \ref{thm:triang-dilation} and the second part of its proof. The converse part is trivial.

\end{proof}

The set $\{(x_1,x_2,x_3) \in \E : x_1x_2=x_3 \}$ consisting of triangular points of $\E$ is nothing but the intersection of $\E$ with the variety $\{ (x_1,x_,x_3)\in \C^3: x_1x_2=x_3 \}$. Hence we call it the \textit{triangular variety} of the tetrablock. We conclude this Section with the following observation that the triangular variety is not only a spectral set but also a complete spectral set for a triangular $\E$-contraction.

\begin{prop} \label{thm:TR-Variety}
The Triangular variety is a complete spectral set for any triangular $\E$-contraction.
\end{prop}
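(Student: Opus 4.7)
The plan is to produce a normal power dilation of $(T_1,T_2,T_1T_2)$ whose joint Taylor spectrum lies in the distinguished boundary of the triangular variety, and then read off the complete-spectral inequality from the joint functional calculus. The underlying idea is that the polynomial map $\pi(z_1,z_2)=(z_1,z_2,z_1z_2)$ is a homeomorphism of $\overline{\D}^2$ onto the triangular variety $\Delta$, so the problem essentially reduces to the matricial Ando inequality for the commuting pair $(T_1,T_2)$ on $\overline{\D}^2$.

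First, by Theorem \ref{thm:triang-dilation} there is a triangular $\E$-isometric dilation $(V_1,V_2,V_1V_2)$ of $(T_1,T_2,T_1T_2)$ on some $\mathcal K\supseteq \HS$. By Lemma \ref{lem:char-triang1}, $V_1$ and $V_2$ are commuting isometries, so Ito's classical unitary extension theorem yields commuting unitaries $(U_1,U_2)$ on a larger space $\widetilde{\mathcal K}$ extending $(V_1,V_2)$. Then $(U_1,U_2,U_1U_2)$ is a commuting normal triple with joint Taylor spectrum contained in $\{(z_1,z_2,z_1z_2):|z_1|=|z_2|=1\}\subseteq \Delta$, power-dilating $(T_1,T_2,T_1T_2)$.

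Given any $k\times k$ matrix-valued rational function $F=[p_{ij}/q]$ whose denominator $q$ does not vanish on $\Delta$, set $G(z_1,z_2):=F(z_1,z_2,z_1z_2)$. Since $\pi(\overline{\D}^2)=\Delta$, the function $G$ is matrix-rational and holomorphic near $\overline{\D}^2$. Substitution --- collapsing powers of $T_1T_2$ into $T_1^mT_2^m$ and similarly for $(U_1,U_2)$ --- produces the operator identities $F(T_1,T_2,T_1T_2)=G(T_1,T_2)$ and $F(U_1,U_2,U_1U_2)=G(U_1,U_2)$. The ampliated dilation formula $G(T_1,T_2)=(I_k\otimes P_\HS)G(U_1,U_2)|_{\mathbb C^k\otimes \HS}$ together with the spectral-theorem norm identity for the commuting normal pair $(U_1,U_2)$ gives
\[
\|F(T_1,T_2,T_1T_2)\|\leq \|G(U_1,U_2)\|=\sup_{\sigma_T(U_1,U_2)}\|G\|\leq \sup_{\T^2}\|G\|\leq \sup_{\overline{\D}^2}\|G\|=\sup_\Delta \|F\|.
\]
Since this holds for every $k$, the triangular variety $\Delta$ is a complete spectral set for $(T_1,T_2,T_1T_2)$.

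The main obstacle is essentially bookkeeping: one must verify that $q(T_1,T_2,T_1T_2)$ is invertible (which follows from $\sigma_T(T_1,T_2,T_1T_2)\subseteq \Delta$ via the projection property of the Taylor spectrum applied to the relation $T_1T_2=T_1T_2$) and that substitution $x_3\mapsto x_1x_2$ intertwines the rational calculi on $\Delta$ and $\overline{\D}^2$ consistently. Once these formalities are in place, the proof is a clean assembly of the triangular $\E$-isometric dilation of Theorem \ref{thm:triang-dilation}, Ito's theorem, and the maximum principle on $\overline{\D}^2$.
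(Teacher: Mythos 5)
Your proposal is correct and follows essentially the same route as the paper: both arguments reduce the statement to a commuting unitary power dilation $(U_1,U_2,U_1U_2)$ of the triple, whose joint spectrum lies in $\{(z_1,z_2,z_1z_2):|z_1|=|z_2|=1\}$, i.e.\ in the distinguished boundary of the triangular variety. The only differences are cosmetic: the paper obtains the unitaries directly from a commuting unitary dilation of $(T_1,T_2)$ and then concludes by citing Arveson's theorem, whereas you reach them via Theorem \ref{thm:triang-dilation} followed by Ito's extension theorem and then verify the matricial von Neumann inequality by hand by transporting everything to $\overline{\D}^2$ through the homeomorphism $\pi(z_1,z_2)=(z_1,z_2,z_1z_2)$.
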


\begin{proof}

Suppose $(P,Q,PQ)$ is a triangular $\E$-contraction. If $(U_1,U_2)$ is a commuting unitary dilation for $(P,Q)$, then $(U_1,U_2,U_1U_2)$ is a triangular $\E$-unitary dilation of $(P,Q, PQ)$. Needless to mention that $\sigma_T(U_1,U_2,U_1U_2)$ consists of points of the form $(z_1,z_2,z_1z_2)$ with $|z_1|=|z_2|=1$ and such points live on $\partial W = \ov{W} \cap \partial \ov{\E}= \ov{W} \cap \partial b \ov{\E}$, where $W$ is the set of triangular points in $\E$. Thus, $(P,Q,PQ)$ dilates to the boundary of $W$ and consequently $\ov{W}=W \cup \partial W$ is a complete spectral set for $(P,Q,PQ)$ by Arveson's theorem.

\end{proof}


\section{DILATION AND DISTINGUISHED VARIETIES IN THE TETRABLOCK} \label{sec:4}

\vspace{0.4cm}

\noindent In this Section, we are going to show how the existence of $\E$-unitary dilation of an $\E$-contraction $(A,B,P)$ determines a distinguished variety in the tetrablock and vice-versa when $\mathcal D_{p^*}$ is finite dimensional. We first find a necessary and sufficient condition under which an $\E$-contraction $(A,B,P)$ dilates to an $\E$-unitary on the minimal unitary dilation space of $P$. Before that we state a result from \cite{Bh-Sau1} which will be useful.  

\begin{lem}[\cite{Bh-Sau1}, Lemma 11] \label{lem:Sau-Bh}
Let $(A,B,P)$ on $\HS$ be an $\E$-contraction and let $F_1,F_2$ and $G_1,G_2$ be the fundamental operators of $(A,B,P)$ and $(A^*,B^*,P^*)$ respectively. Then the following are equivalent.
\begin{itemize}

\item[(i)] $[F_1,F_2]=0$ and $[F_1^*,F_1]=[F_2^*,F_2]$ ;

\item[(ii)] $[G_1,G_2]=0$ and $[G_1^*,G_1]=[G_2^*,G_2]$.

\end{itemize}

\end{lem}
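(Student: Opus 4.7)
\medskip
\noindent\textbf{Proof proposal.} The statement is manifestly symmetric in the two pairs $(F_1,F_2)$ and $(G_1,G_2)$: if one applies the whole setup to the $\E$-contraction $(A^*,B^*,P^*)$, then the role of $(F_1,F_2)$ is played by $(G_1,G_2)$ and the role of $(G_1,G_2)$ by $(F_1,F_2)$. So it is enough to prove the direction (i)$\Rightarrow$(ii); applying the same proof to $(A^*,B^*,P^*)$ then yields the converse.

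The plan is to work directly from the four fundamental equations
\[
A-B^*P=D_PF_1D_P,\q B-A^*P=D_PF_2D_P,
\]
\[
A^*-BP^*=D_{P^*}G_1D_{P^*},\q B^*-AP^*=D_{P^*}G_2D_{P^*},
\]
together with the commutation relations $[A,B]=[A,P]=[B,P]=0$ and their adjoints, and the intertwiner $D_PP^*=P^*D_{P^*}$ (equivalently $PD_P=D_{P^*}P$), which follows from $P^*(I-PP^*)=(I-P^*P)P^*$. The key technical step is the identity
\[
D_{P^*}\bigl(G_1D_{P^*}^2G_2-G_2D_{P^*}^2G_1\bigr)D_{P^*}=(A^*-BP^*)(B^*-AP^*)-(B^*-AP^*)(A^*-BP^*).
\]
When the right-hand side is expanded, the terms $A^*B^*-B^*A^*$ vanish; the terms of the form $(\cdots)P^*$ collect into $\bigl([AA^*,B]-[A,BB^*]\bigr)$-like combinations that vanish by commutativity; the ``awkward'' quadratic-in-$P^*$ terms $BP^*AP^*-AP^*BP^*$ can be rewritten, after inserting $P^*A=A^*{}^*P^*-(\text{defect correction})$, as an expression sandwiched by $D_{P^*}$ and $P^*$. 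Using $D_PP^*=P^*D_{P^*}$ to push the defect of $P^*$ through $P^*$, one should obtain a surviving expression of the form
\[
D_{P^*}\bigl(G_1D_{P^*}^2G_2-G_2D_{P^*}^2G_1\bigr)D_{P^*}=P\,D_P(F_1F_2-F_2F_1)D_P\,P^*,
\]
so the hypothesis $[F_1,F_2]=0$ forces the left-hand side to vanish on $\mathcal H$. Since $D_{P^*}$ has dense range in $\mathcal D_{P^*}$, this gives $[G_1,G_2]=0$ on $\mathcal D_{P^*}$, where $G_1,G_2$ live.

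An entirely parallel manipulation handles the self-commutator condition. Compute $(A^*-BP^*)(A-B^*P)-(A-B^*P)(A^*-BP^*)$ and $(B^*-AP^*)(B-A^*P)-(B-A^*P)(B^*-AP^*)$, expand both and subtract; the commutativity of $(A,B,P)$ again wipes out the cross terms, and the intertwiner identity allows the remaining factors of $D_P,D_{P^*},P,P^*$ to be collected so that the difference is proportional to
$D_{P^*}\bigl([G_1^*,G_1]-[G_2^*,G_2]\bigr)D_{P^*}$ on one side and to $P\,D_P\bigl([F_1^*,F_1]-[F_2^*,F_2]\bigr)D_P\,P^*$ on the other. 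Under (i) the second quantity vanishes, hence the first does, and density of $\mathrm{Ran}\,D_{P^*}$ in $\mathcal D_{P^*}$ yields $[G_1^*,G_1]=[G_2^*,G_2]$.

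The main obstacle in this plan is the bookkeeping in the algebraic expansion, because $P^*$ need \emph{not} commute with $A$ or $B$ (only $P$ does). The trick that keeps the computation tractable is to never try to commute $P^*$ past $A$ or $B$ in isolation, but only to move $P^*$ across a defect operator via $D_PP^*=P^*D_{P^*}$; this is possible precisely because in every expansion the offending factor of $P^*$ is already flanked by a defect operator coming from one of the four fundamental equations. Once this reorganisation is carried out, (i)$\Rightarrow$(ii) drops out by density, and (ii)$\Rightarrow$(i) follows by the symmetry noted at the outset.
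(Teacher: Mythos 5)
First, note that the paper does not prove this statement at all: it is quoted verbatim as Lemma 11 of \cite{Bh-Sau1}, so your attempt can only be measured against what is actually needed to make it work. Your opening reduction is fine: since the adjoint of an $\E$-contraction is again an $\E$-contraction and the roles of $(F_1,F_2)$ and $(G_1,G_2)$ swap under passing to $(A^*,B^*,P^*)$, it suffices to prove (i)$\Rightarrow$(ii). The trouble is in the core computation, where there are two concrete gaps. (a) Expanding $(A^*-BP^*)(B^*-AP^*)-(B^*-AP^*)(A^*-BP^*)$ gives, after cancelling $A^*B^*-B^*A^*$, the expression $(B^*B-A^*A)P^*+(AP^*A^*-BP^*B^*)+(BP^*AP^*-AP^*BP^*)$. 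You assert that the first group vanishes ``by commutativity,'' but these terms involve adjoints: only $A,B,P$ mutually commute, not $A^*A$ with $B$, etc., so nothing here is killed by the commutativity of the triple. Turning this remainder into $P\,D_P[F_1,F_2]D_P\,P^*$ is precisely the hard part, and it is not done; in \cite{Bh-Sau1} it requires separately established intertwining relations between the two sets of fundamental operators (of the type $PF_i=G_i^*P$ on $\mathcal D_P$ together with identities involving $D_PD_{P^*}$), which your argument never produces. (b) Even granting your key identity, its left-hand side is $D_{P^*}\bigl(G_1D_{P^*}^2G_2-G_2D_{P^*}^2G_1\bigr)D_{P^*}$. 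Density of $\operatorname{Ran}D_{P^*}$ in $\mathcal D_{P^*}$ only strips the two \emph{outer} factors, leaving $G_1D_{P^*}^2G_2=G_2D_{P^*}^2G_1$; the inner $D_{P^*}^2$ cannot be removed by any density argument (it need not be injective with dense range ``in the middle'' in the required sense, and it need not commute with $G_1,G_2$), so $[G_1,G_2]=0$ does not follow. The same two objections apply, with interest, to your sketch for $[G_1^*,G_1]=[G_2^*,G_2]$, where the proposed products such as $(A^*-BP^*)(A-B^*P)$ produce the mixed string $D_{P^*}G_1D_{P^*}D_PF_1D_P$ rather than anything resembling a self-commutator of $G_1$ alone. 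In short: the symmetry reduction is correct, but the proof of either implication is missing; the essential missing ingredient is the set of intertwining identities linking $F_i$ and $G_i$ across the defect spaces.
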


\begin{thm} \label{thm:E-unitary dilation}
Let $(A,B,P)$ be an $\E$-contraction on a Hilbert space $\HS$ and let $F_1,F_2$ and $G_1,G_2$ be the fundamental operators of $(A,B,P)$ and $(A^*,B^*,P^*)$. Let $\mathcal K$ be the minimal unitary dilation space for the contraction $P$. Then $(A,B,P)$ dilates to an $\E$-unitary $T_1, T_2, U$ on $\mathcal K$ with $U$ being the minimal unitary dilation of $P$ if and only if $[F_1,F_2]=0$ and $[F_1^*,F_1]=[F_2^*,F_2]$.
\end{thm}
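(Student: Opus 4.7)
My plan is to prove the two directions separately, reducing each to an analysis of a pure $\E$-isometric component of an $\E$-isometric dilation. The organising tools are the Wold decomposition of $\E$-isometries (Theorem \ref{thm:ti}(4)), the model Theorem \ref{modelthm1} for pure $\E$-isometries, the pure $\E$-isometric dilation of Theorem \ref{thm:pure-E-dilation}, and Lemma \ref{lem:Sau-Bh} to pass between the commuting/defect conditions on $(F_1,F_2)$ and on $(G_1,G_2)$.

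\emph{Necessity.} Suppose $(T_1,T_2,U)$ is an $\E$-unitary dilation of $(A,B,P)$ on the minimal unitary dilation space $\mathcal K$ of $P$. Since every $\E$-unitary is an $\E$-isometry, Proposition \ref{prop:minimal-dil} produces a minimal $\E$-isometric dilation $(Y_1,Y_2,Y)$ of $(A,B,P)$ sitting on a joint invariant subspace $\widetilde{\mathcal K}_+\subseteq\mathcal K$. Applying the Wold decomposition (Theorem \ref{thm:ti}(4)) to $(Y_1,Y_2,Y)$ splits it as an $\E$-unitary plus a pure $\E$-isometry; the pure piece $\E$-isometrically dilates the c.n.u. part of $(A,B,P)$. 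Theorem \ref{modelthm1} applied to this pure piece forces the commutator identities on its fundamental operators, and a routine bookkeeping involving $\mathcal D_{(P|_{\HS_{cnu}})^*}$ and the minimal isometric dilation of $P|_{\HS_{cnu}}$ identifies these operators with $G_1,G_2$ of $(A^*,B^*,P^*)$. The $\E$-unitary summand contributes only a normal commuting triple and so satisfies the commutator identities trivially. Combined, $[G_1,G_2]=0$ and $[G_1^*,G_1]=[G_2^*,G_2]$, and Lemma \ref{lem:Sau-Bh} transfers these conditions to $F_1,F_2$.

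\emph{Sufficiency.} Assume $[F_1,F_2]=0$ and $[F_1^*,F_1]=[F_2^*,F_2]$; Lemma \ref{lem:Sau-Bh} gives the same for $G_1,G_2$. The $\E$-unitary summand of $(A,B,P)$ from the canonical decomposition needs no enlargement, so I concentrate on the c.n.u. part $(A_c,B_c,P_c)$. The minimal unitary dilation space of $P_c$ has the standard Sch\"affer realisation with a past-present-future decomposition. On the future summand I would define $T_1,T_2$ by the explicit formulas of Theorem \ref{thm:pure-E-dilation} built from $G_1,G_2$; on the past summand I would use the dual Sch\"affer-style construction built from $F_1,F_2$; and on the middle copy of $\HS_{cnu}$ I put $A_c,B_c$ themselves. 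The two commutator hypotheses ensure that the resulting block operators commute pairwise and satisfy $T_1=T_2^*U$ with $T_2$ a contraction, so Theorem \ref{thm:tu}(3) certifies that $(T_1,T_2,U)$ is an $\E$-unitary, and by construction it dilates $(A,B,P)$ on the minimal unitary dilation space of $P$.

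\emph{Main obstacle.} The principal difficulty lies in the sufficiency direction: the block-by-block construction must be carried out so that the components defined from $F_1,F_2$ on the past portion of $\mathcal K$ agree with the components defined from $G_1,G_2$ on the future portion across the shared copy of $\HS_{cnu}$. This interface matching is precisely where both commutator identities are consumed, and it is the only step with no counterpart in the pure-contraction case treated by Theorem \ref{thm:pure-E-dilation}. A comparatively routine secondary obstacle, on the necessity side, is the identification of the fundamental operators of the pure part of the minimal $\E$-isometric dilation with $G_1,G_2$ of $(A^*,B^*,P^*)$; this is a compression computation on the defect space of $P$.
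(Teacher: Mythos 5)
Your sufficiency construction is essentially the one the paper uses (following Bhattacharyya--Sau): Sch\"affer-type block operators on $l^2(\mathcal D_P)\oplus\HS\oplus l^2(\mathcal D_{P^*})$ with entries built from $F_1,F_2$ on one side, $A$ (resp.\ $B$) in the middle, and $G_1,G_2$ on the other, coupled through $D_P$, $P^*$ and $D_{P^*}$. The genuine gap is in your final step: you certify that $(T_1,T_2,U)$ is an $\E$-unitary by citing Theorem \ref{thm:tu}(3) with ``$T_1=T_2^*U$ and $T_2$ a contraction,'' but you offer no argument that $T_2$ is a contraction, and that is precisely the hard part --- it is not visible from the block matrix, and the two commutator hypotheses do not deliver it through the ``interface matching'' you describe. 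The paper's proof is organized exactly to supply this point: from $T_1=T_2^*U$ and $T_2=T_1^*U$ it deduces that $T_1,T_2$ are \emph{normal}, so $\|f(T_1,T_2,U)\|=r(f(T_1,T_2,U))$ for every polynomial $f$; the block upper-triangular form plus the Hong-Ke--Jin spectral inclusion confine $\sigma(f(T_1,T_2,U))$ to the union of the spectra of the three diagonal blocks; and the commutator conditions (with Lemma \ref{lem:Sau-Bh}) make $F_1^*+zF_2$ and $G_1^*+zG_2$ normal, upgrading the automatic numerical-radius bound to $\|F_1^*+zF_2\|_{\infty,\T}\le 1$ and $\|G_1^*+zG_2\|_{\infty,\T}\le 1$, so that by Theorem \ref{modelthm1} the two corner triples are an $\E$-isometry and an $\E$-co-isometry. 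Hence $r(f(T_1,T_2,U))\le\|f\|_{\infty,\ov{\E}}$, which gives von Neumann's inequality and, taking $f=z_2$, the contractivity of $T_2$. Some such spectral argument (or a direct norm estimate) must precede any appeal to Theorem \ref{thm:tu}(3).

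Two smaller points. The paper does not reprove the necessity direction; it is quoted from Bhattacharyya--Sau. Your Wold-decomposition sketch for it is a plausible outline, but the step you dismiss as a ``routine compression computation'' conceals a real issue: the minimal $\E$-isometric dilation space $\overline{\operatorname{span}}\{T_1^{m_1}T_2^{m_2}U^nh\}$ may be strictly larger than the minimal isometric dilation space of $P$, so identifying the fundamental operators of the pure Wold summand with $G_1,G_2$ acting on $\mathcal D_{P^*}$ is not automatic. Finally, you locate the ``main obstacle'' in matching the past and future blocks across $\HS$; in the paper that matching (the commutativity of the big block matrices) is also imported from Bhattacharyya--Sau, and the real labour goes into the normality-plus-spectral-inclusion argument above.
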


\begin{proof}

As varient of this theorem was proved in \cite{Bh-Sau1} (see Theorems 4 \& 15 in \cite{Bh-Sau1}). We provide an alternative proof to the forward direction here. Suppose $(A,B,P)$ is an $\E$-contraction and its fundamental operators $F_1,F_2$ satisfy $[F_1,F_2]=0$ and $[F_1^*,F_1]=[F_2^*,F_2]$. Let us define $(T_1,T_2,U)$ on $\mathcal K$ in the following way:
\begin{eqnarray}\label{2.3}
&T_1 =\footnotesize \left[
\begin{array}{ c c c c|c|c c c c}
\bm{\ddots}&\vdots &\vdots&\vdots   &\vdots  &\vdots& \vdots&\vdots&\vdots\\
\cdots&0&F_1&F_{2}^*  &0&  0&0&0&\cdots\\
\cdots&0&0&F_1  &F_{2}^*D_{P}&  -F_{2}^*P^*&0&0&\cdots\\
\hline

\cdots&0&0&0   &A&   D_{P^*}{G_{2}}&0&0&\cdots\\ \hline

\cdots&0&0&0   &0&  {G_1}^*& {G_{2}}&0&\cdots\\
\cdots&0&0&0   &0&  0&{G_1}^*&{G_{2}}&\cdots\\
\vdots&\vdots&\vdots&\vdots&\vdots&\vdots&\vdots&\vdots&\bm{\ddots}\\
\end{array} \right]\,, \\&
T_2 =\footnotesize \left[
\begin{array}{ c c c c|c|c c c c}
\bm{\ddots}&\vdots &\vdots&\vdots   &\vdots  &\vdots& \vdots&\vdots&\vdots\\
\cdots&0& F_2 & F_{1}^*  &0&  0&0&0&\cdots\\
\cdots&0&0& F_2  & F_{1}^*D_{P}&  -F_{1}^*P^*&0&0&\cdots\\
\hline

\cdots&0&0&0   &B&   D_{P^*}{G_{1}}&0&0&\cdots\\ \hline

\cdots&0&0&0   &0&  {G_2}^*& {G_{1}}&0&\cdots\\
\cdots&0&0&0   &0&  0&{G_2}^*&{G_{1}}&\cdots\\
\vdots&\vdots&\vdots&\vdots&\vdots&\vdots&\vdots&\vdots&\bm{\ddots}\\
\end{array} \right] \,,
\\&
\text{ \large and }\quad U = \left[
\begin{array}{ c c c c|c|c c c c}
\bm{\ddots}&\vdots &\vdots&\vdots   &\vdots  &\vdots& \vdots&\vdots&\vdots\\
\cdots&0&0&I  &0&  0&0&0&\cdots\\
\cdots&0&0&0  &D_{P}&  -{P}^*&0&0&\cdots\\ \hline

\cdots&0&0&0   &P&   D_{P^*}&0&0&\cdots\\ \hline

\cdots&0&0&0   &0&  0& I&0&\cdots\\
\cdots&0&0&0   &0&  0&0&I&\cdots\\
\vdots&\vdots&\vdots&\vdots&\vdots&\vdots&\vdots&\vdots&\bm{\ddots} \label{2.33}\\
\end{array} \right].
\end{eqnarray}
It is evident from the block matrix form that $U$ is the minimal Schaeffer unitary dilation of $P$. We show that $(T_1,T_2,U)$ is an $\E$-unitary dilation of $(A,B,P)$. Suppose the block matrix of $T_1, T_2$ and $U$ with respect to the decomposition $l^2(\mathcal
D_{P})\oplus \mathcal H \oplus l^2(\mathcal D_{P^*})$ of $\mathcal
K$ are
\begin{equation}\label{sec:eqn-011}
\left[
\begin{array}{ccc}
R_{1} & R_{2} & R_{3}\\
0 & A & R_{4}\\
0& 0& R_{5}  \end{array} \right],
\left[
\begin{array}{ccc}
S_{1} & S_{2} & S_{3}\\
0 & B & S_{4}\\
0& 0& S_{5}  \end{array} \right]
\text{ and } \left[
\begin{array}{ccc}
Q_{1} & Q_{2} & Q_{3}\\
0 & P & Q_{4}\\
0& 0& Q_{5}  \end{array} \right]
\end{equation}
respectively. The commutativity of the triple $(T_1,T_2,U)$ was proved in Theorem 4 in \cite{Bh-Sau1}. It is obvious from the upper triangular forms of the block matrices of $T_1,T_2,U$ that
\[
 P_{\mathcal H}(T_1^{n_1},T_{2}^{n_2}U^n)|_ {\mathcal H}=A^{n_1}B^{n_2}P^n\,,
\]
for all integers $n_1,n_2,n$. This proves that
$(T_1,T_2,U)$ dilates $(A,B,P)$. The
minimality of the dilation follows from the fact that $U$ on $\mathcal K$
is the minimal unitary dilation of $P$. Thus, it is enough to show that $(T_1,T_2,U)$ is an $\E$-unitary which by Theorem \ref{thm:tu} is same as proving that
$(T_1,T_{2},U)$ is a $\Gamma_n$-contraction because $U$ is already a unitary. Since $\ov{\E}$ is polynomially convex, it suffices to verify von Neumann's inequality for any $f \in \C[z_1,z_2,z_3]$, that is,
\[
\| f(T_1,T_{2},U) \| \leq \| f \|_{\infty , \E}.
\]
The fact that $\sigma_T(T_1,T_2,U) \subseteq \ov{\E}$ then follows as a consequence. Note that we have from the proof of Theorem 4 in \cite{Bh-Sau1} that $T_1=T_{2}^*U$ and $T_2=T_1^*U$. Now
\[
T_1^*T_1=U^*T_2T_1=U^*T_1T_2=T_1U^*T_2=T_1T_1^*.
\]
similarly we have $T_2^*T_2=T_2T_2^*$ and so $T_1,T_2, U$ are commuting normal operators.\\

Let $f$ be any polynomial in $\C[z_1,z_2,z_3]$. It is evident from the block matrices of $T_1,T_{2},U$
that
\[
 f(T_1,T_{2},U)  =\left[
\begin{array}{ccc}
f(R_{1}, S_{1}, Q_1) & * & *\\
0 & f(A,B,P) & *\\
0& 0& f(R_{5},S_{5},Q_5)  \end{array} \right].
\]
Note that $f(T_1,T_{2},U)$ is a normal operator as $T_1,T_2,U$ are commuting normal operators. Therefore,
\[
\|f(T_1,T_2,U)\|=r(f(T_1,T_{2},U)).
\]
Let us consider the following diagonal block matrices of operators defined on $\mathcal K$:
\[
\widehat{T_1}=\left[
\begin{array}{ccc}
R_{1} & 0 & 0\\
0 & A & 0\\
0& 0& R_{5}  \end{array} \right], \;
\widehat{T_2}=\left[
\begin{array}{ccc}
S_{1} & 0 & 0\\
0 & B & 0\\
0& 0& S_{5}  \end{array} \right], \;\widehat{U}=\left[
\begin{array}{ccc}
Q_1 & 0 & 0\\
0 & P & 0\\
0& 0& Q_5  \end{array} \right].
\]
Since $F_1,F_2$ and $G_1,G_2$ are fundamental operators of $(A,B,P)$ and $(A^*,B^*,P^*)$ respectively, we have that the numerical radii $\omega(F_1^*+zF_2), \omega(G_1^*+zG_2)$ are not greater than $1$ for any $z\in \T$. Again $F_1^*+zF_2$ is a normal operator for any $z\in \T$ as $F_1,F_2$ commute and $[F_1^*,F_1]=[F_2^*,F_2]$. Invoking Lemma \ref{lem:Sau-Bh} we have that $G_1^*+zG_2$ is normal for any $z$ of unit modulus. Therefore,
\[
\|F_1^*+zF_2\|_{\infty, \T} \leq 1 ,\; \|G_1^*+zG_2\|_{\infty, \T} \leq 1.
\] 
Thus, it follows from Theorem \ref{modelthm1} that $(R_{1},S_{1},Q_1)$ and $(R_{5},S_{5},Q_5)$ are $\E$-isometry and $\E$-co-isometry respectively. Therefore,
$(\widehat{T_1},\widehat{T_{2}},\widehat{U})$ is an
$\E$-contraction by being direct sum of three
$\E$-contractions. Note that
\[
 f(\widehat{T_1},\widehat{T_{2}},\widehat{U})  =\left[
\begin{array}{ccc}
f(R_{1}, S_{1}, Q_1) & 0 & 0\\
0 & f(A,B,P) & 0\\
0& 0& f(R_{5},S_{5},Q_5)  \end{array} \right].
\]
We now apply Lemma 1 of \cite{hong} to $f(T_1,T_{2},U)$, which states that the spectrum of an operator of the
form $\begin{bmatrix} X&Y\\0&Z
\end{bmatrix}$ is a subset of $\sigma(X)\cup \sigma (Z)$.
So, we arrive at
\begin{align*}
\sigma(f(T_1,T_{2},U)) & \subseteq
\sigma(f(R_{1},S_{1}, Q_1))\cup
\sigma(f(A,B,P)) \\& \quad \cup
\sigma(f(R_{5},S_{5}, Q_5)) \\
& =
\sigma(f(\widehat{T_1},\widehat{T_{2}},\widehat{U})).
\end{align*}
By an application of the spectral mapping theorem we have
\[
\sigma(f(\widehat{T_1},\widehat{T_{2}},\widehat{U}))
=\{ f(\lambda_1,\lambda_2,\lambda_3)\,:\,
(\lambda_1,\lambda_2,\lambda_3)\in \sigma_T(\widehat{T_1},\widehat{T_{2}}, \widehat{U}) \}.
\]
Since $(\widehat{T_1},\widehat{T_{2}},\widehat{U})$
is an $\E$-contraction,
$\sigma_T(\widehat{T_1},\widehat{T_{2}}, \widehat{U})
\subseteq \ov{\E}$. So, we have
\[
r(f(\widehat{T_1},\widehat{T_{2}}, \widehat{U}))\leq
\sup_{(z_1,z_2,z_{3})\in \ov{\E}} \,|f(z_1,z_2,z_{3})|=\|f\|_{\infty,\ov{\E}}.
\]
Since
$
\sigma(f(T_1,T_{2},U)) \subseteq
\sigma(f(\widehat{T_1},\widehat{T_{2}}, \widehat{U}))\,,
$
we have that
$r(f(T_1,T_2,U)) \leq \|f\|_{\infty, \ov{\E}}$. Therefore, we have
$
\|f(T_1,T_2,U)
\|=r(f(T_1,T_2,U))\leq
\|f\|_{\infty, \ov{\E}},
$
and the proof is complete.

\end{proof}

Recall that a distinguished variety in $\E$ is the intersection of an algebraic variety $V$ with $\E$ such that $V$ exits the tetrablock through the distinguished boundary $b\E$ without intersection any other part of the topological boundary of $\E$.
\begin{defn}
A pair of commuting matrices $F,G$ is said to define a distinguished variety in $\E$ if $\mathcal Z(f_1,f_2)\cap \E$ is a distinguished variety in $\E$, where $\mathcal Z(f_1,f_2)$ is the algebraic variety generated by the polynomials $\{f_1,f_2 \}= \{\det\,(F^*+z_3G -z_1I), \, \det \, (G^*+z_3F - z_2I) \}$.
\end{defn}
The following theorem provides a description of all distinguished varieties in the tetrablock.
\begin{thm}[\cite{S:P-tetra2}, Theorem 4.5] \label{thm:DVchar-T}
Let
\begin{equation}\label{eq:W} \Omega = \{(x_1,x_2,x_3) \in
\mathbb E \,:\, (x_1,x_2)\in \sigma_T(A_1^*+x_3A_2,
A_2^*+x_3A_1)\},
\end{equation}
where $A_1,A_2$ are commuting square matrices of same order such
that
\begin{enumerate}
\item $[A_1^*,A_1]=[A_2^*,A_2]$ \item $\|A_1^*+ zA_2\|_{\infty,
\mathbb T}<1$.
\end{enumerate}
Then $\Omega$ is a one-dimensional distinguished variety in
$\mathbb E$. Conversely, every distinguished variety in $\mathbb
E$ is one-dimensional and can be represented as (\ref{eq:W}) for
two commuting square matrices $A_1,A_2$ of same order, such that
\begin{enumerate}
\item $[A_1^*,A_1]=[A_2^*,A_2]$ \item $\|A_1^*+ zA_2 \|_{\infty,
\mathbb T}\leq 1$.
\end{enumerate}
\end{thm}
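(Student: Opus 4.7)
The plan is to use the functional model for pure $\E$-isometries (Theorem \ref{modelthm1}) as the bridge between the matrix conditions and the geometry of the variety, together with the rigidity of the distinguished boundary encoded in Theorem \ref{thm:tu}.

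\textbf{Forward direction.} For $A_1,A_2$ satisfying (1)--(2), I observe that $\Omega$ is the intersection of $\E$ with the algebraic variety
\[
\mathcal Z(f_1,f_2),\qquad f_1=\det(A_1^*+x_3A_2-x_1I),\ \ f_2=\det(A_2^*+x_3A_1-x_2I),
\]
which is one-dimensional since it is cut out by two equations in three variables. The heart of the argument is to prove that $\mathcal Z(f_1,f_2)$ exits $\ov{\E}$ only through $b\ov{\E}$, which I split by $|x_3|$. On $|x_3|=1$, condition (1) with commutativity makes $A_1^*+x_3A_2$ normal, and since $A_1^*+x_3A_2$ and $A_2^*+x_3A_1$ commute there is a joint eigenvector $v$ with eigenvalues $(x_1,x_2)$. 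Normality yields $(A_1+\bar{x_3}A_2^*)v=\bar{x_1}v$, and substituting this into $(A_2^*+x_3A_1)v=x_2v$ with $|x_3|=1$ produces $x_1=\bar{x_2}x_3$; the strict inequality in (2) gives $|x_2|<1$, so $(x_1,x_2,x_3)\in b\ov{\E}$ by Theorem \ref{thm:tu}. On $|x_3|<1$, the triple $(T_{A_1^*+zA_2},T_{A_2^*+zA_1},T_z)$ on $H^2(\C^d)$ is a pure $\E$-isometry by Theorem \ref{modelthm1}, so its Taylor joint spectrum lies in $\ov{\E}$; its fiber over $x_3\in\D$ coincides with $\sigma(A_1^*+x_3A_2,A_2^*+x_3A_1)$, which thus lies in the open slice $\E\cap\{z_3=x_3\}$ rather than on $\partial\ov{\E}\setminus b\ov{\E}$. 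Hence $\Omega$ is a one-dimensional distinguished variety.

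\textbf{Converse direction.} Given a distinguished variety $V$ in $\E$, I construct $A_1,A_2$ from operator theory on $V$. Equip $\partial V:=V\cap b\ov{\E}$ with a natural measure $\mu$ (arclength after a conformal parametrization of the curve) and form the closure $H^2(V)$ of polynomials in $L^2(\partial V,\mu)$. The coordinate multiplications $(M_{z_1},M_{z_2},M_{z_3})$ on $L^2(\partial V,\mu)$ form an $\E$-unitary (using $\partial V\subseteq b\ov{\E}$), so their restrictions to $H^2(V)$ form an $\E$-isometry, and purity of $M_{z_3}$ follows from $|z_3|<1$ on $V$. Theorem \ref{modelthm1} then identifies this pure $\E$-isometry with $(T_{G_1^*+zG_2},T_{G_2^*+zG_1},T_z)$ on $H^2(\mathcal D_{M_{z_3}^*})$, where $G_1,G_2$ automatically satisfy $[G_1,G_2]=0$, $[G_1^*,G_1]=[G_2^*,G_2]$ and $\|G_1^*+zG_2\|_{\infty,\T}\le 1$. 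The one-dimensionality of $V$ forces $\dim \mathcal D_{M_{z_3}^*}<\infty$, so setting $A_1=G_1$ and $A_2=G_2$ produces the required matrix pair, and the representation (\ref{eq:W}) recovers $V$ by matching joint spectra fiber-wise over $x_3$.

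\textbf{The main obstacle} lies in the converse, specifically in (i) deducing finite dimensionality of $\mathcal D_{M_{z_3}^*}$ from the one-dimensionality of $V$, and (ii) proving that the variety recovered from the extracted pair $(A_1,A_2)$ via (\ref{eq:W}) is precisely $V$ and not merely a subvariety containing it or contained in it. Both steps require a Beurling-type uniqueness argument on $H^2(V)$ that identifies the fibers of $M_{z_3}$ with cyclic subspaces generated by $G_1,G_2$; this parallels Agler--McCarthy's treatment of distinguished varieties in the bidisk but is more delicate, because the tetrablock's three-variable structure couples the fiber spectra and the $x_3$-coordinate through the nontrivial symbol $G_1^*+zG_2$ rather than through independent variables.
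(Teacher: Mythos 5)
A preliminary remark: this paper does not prove Theorem \ref{thm:DVchar-T} at all --- it is imported verbatim from \cite{S:P-tetra2} --- so there is no in-paper proof to measure you against; your outline does follow the same Agler--McCarthy-style route as the cited source, and I am assessing it on its own terms. In the forward direction two steps are softer than you present them. First, you identify $\Omega$ with $\mathcal Z(f_1,f_2)\cap \E$; for commuting matrices the Taylor joint spectrum is the set of \emph{joint} eigenvalue pairs, which in general is a proper subset of $\sigma(A_1^*+x_3A_2)\times\sigma(A_2^*+x_3A_1)$, so the equality of $\Omega$ with the common zero set of the two characteristic polynomials (and even the mere algebraicity of $\Omega$) requires an argument, e.g.\ via the incidence variety of joint eigenvectors. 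Second, for $|x_3|<1$ you infer that the fiber lies in the \emph{open} slice $\E\cap\{z_3=x_3\}$ from the containment $\sigma_T(T_{A_1^*+zA_2},T_{A_2^*+zA_1},T_z)\subseteq\ov{\E}$; that containment only yields $\ov{\E}$ and does not exclude $\partial\ov{\E}\setminus b\ov{\E}$. You must push the strict bound $\|A_1^*+zA_2\|_{\infty,\T}<1$ through a joint-eigenvector computation to obtain the strict inequalities of Theorem \ref{thm:21} (exactly the $|\alpha_1|+|\alpha_2|$ estimate carried out in the proof of (3) $\Rightarrow$ (4) of Theorem \ref{thm:dilation-variety1}, with strict inequality). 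Your $|x_3|=1$ argument, by contrast, is correct: normality of $A_1^*+x_3A_2$ does force $x_1=\bar{x_2}x_3$ and $|x_2|<1$, so those points land in $b\ov{\E}$ by Theorem \ref{thm:tu}.

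The decisive gap is in the converse, and you have flagged it yourself: you do not prove (i) that $\mathcal D_{M_{z_3}^*}$ is finite dimensional, nor (ii) that the variety reconstructed from $(G_1,G_2)$ via (\ref{eq:W}) is exactly $V$ rather than a larger or smaller variety. These are not peripheral loose ends; they are the content of the converse. For (i) the needed input is that the projection of the one-dimensional variety $V$ onto the $x_3$-coordinate is a finite branched cover of $\D$, so that $M_{z_3}$ on $H^2(V)$ is a pure isometry whose multiplicity equals the generic cardinality of the fiber. For (ii) one must match, for each $x_3\in\D$, the joint eigenvalues of $(G_1^*+x_3G_2,\,G_2^*+x_3G_1)$ acting on $\mathcal D_{M_{z_3}^*}$ with the points of $V$ lying over $x_3$ --- typically via the evaluation functionals or reproducing kernels of $H^2(V)$ --- and then conclude equality of the two one-dimensional varieties from their agreement on a set with accumulation points in every irreducible component. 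Until those two steps are supplied, the converse is a program rather than a proof; as written, the proposal establishes (modulo the repairs above) only the forward implication.
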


\begin{prop} \label{thm:pure-dilation}
Let $(A,B,P)$ on $\HS$ be a pure $\E$-contraction and let $(G_1,G_2)$ be the $\ft$-tuple of $(A^*,B^*,P^*)$. Then the following are equivalent.
\begin{enumerate}

\item $(A,B,P)$ dilates to an $\E$-unitary $(T_1,T_2,U)$ on $L^2(\mathcal D_{P^*})$, where $U$ is the minimal unitary dilation of $P$.

\item $[G_1,G_2]=0$, $[G_1^*,G_1]=[G_2^*,G_2]$.

\end{enumerate}

\end{prop}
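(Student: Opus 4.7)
The plan is to derive Proposition \ref{thm:pure-dilation} by specializing Theorem \ref{thm:E-unitary dilation} and Lemma \ref{lem:Sau-Bh} to the pure setting. The only ingredient required beyond these two cited results is the identification of the minimal unitary dilation space of $P$ in the pure case.

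The first step is to note that for a pure contraction $P$ on $\HS$ (i.e., ${P^*}^n\to 0$ strongly), the Sz.-Nagy--Foias functional model identifies $\HS$ with a co-invariant subspace of $H^2(\mathcal D_{P^*})$, so that the minimal isometric dilation of $P$ is the shift $M_z$ on $H^2(\mathcal D_{P^*})$, and its minimal unitary extension, the bilateral shift on $L^2(\mathcal D_{P^*})$, is the minimal unitary dilation of $P$. In particular, the Schaeffer-form minimal unitary dilation space $l^2(\mathcal D_P)\oplus\HS\oplus l^2(\mathcal D_{P^*})$ that appears in the construction within the proof of Theorem \ref{thm:E-unitary dilation} is, when $P$ is pure, unitarily equivalent to $L^2(\mathcal D_{P^*})$ via the standard intertwiner. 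With this identification in hand, the minimal unitary dilation space in the hypothesis of Theorem \ref{thm:E-unitary dilation} specializes to exactly the space $L^2(\mathcal D_{P^*})$ appearing in the statement of the proposition.

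The equivalence now follows in two strokes. Applying Theorem \ref{thm:E-unitary dilation}, condition (1) is equivalent to $[F_1,F_2]=0$ and $[F_1^*,F_1]=[F_2^*,F_2]$, where $F_1,F_2$ are the fundamental operators of $(A,B,P)$. Applying Lemma \ref{lem:Sau-Bh}, these identities for $F_1,F_2$ are equivalent to the analogous identities $[G_1,G_2]=0$ and $[G_1^*,G_1]=[G_2^*,G_2]$ for the fundamental operators $G_1,G_2$ of $(A^*,B^*,P^*)$, which is precisely condition (2). The only mildly delicate step is the explicit intertwining of the Schaeffer dilation model with the bilateral shift model on $L^2(\mathcal D_{P^*})$; this is standard Sz.-Nagy--Foias theory and presents no real obstacle.
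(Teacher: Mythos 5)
Your argument is correct, and your forward implication $(1)\Rightarrow(2)$ is exactly the paper's: identify the minimal unitary dilation of the pure contraction $P$ with the bilateral shift on $L^2(\mathcal D_{P^*})$ via Sz.-Nagy--Foias theory, then invoke Theorem \ref{thm:E-unitary dilation} together with Lemma \ref{lem:Sau-Bh} to pass from the $F_i$-identities to the $G_i$-identities. Where you genuinely diverge is in $(2)\Rightarrow(1)$. You run Theorem \ref{thm:E-unitary dilation} in reverse --- translate the $G_i$-conditions into the $F_i$-conditions by Lemma \ref{lem:Sau-Bh}, obtain the Schaeffer-form dilation on $l^2(\mathcal D_P)\oplus\HS\oplus l^2(\mathcal D_{P^*})$, and transport it to $L^2(\mathcal D_{P^*})$ using uniqueness of the minimal unitary dilation. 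The paper instead never returns to the $F_i$: condition $(2)$ is precisely the hypothesis of Theorem \ref{thm:pure-E-dilation}, which gives the pure $\E$-isometric dilation $(T_{G_1^*+zG_2},T_{G_2^*+zG_1},T_z)$ on $H^2(\mathcal D_{P^*})$, and this extends canonically to the $\E$-unitary $(M_{G_1^*+zG_2},M_{G_2^*+zG_1},M_z)$ on $L^2(\mathcal D_{P^*})$. Your route is more economical in that a single theorem carries both directions, but it delivers the dilation only up to an unspecified intertwining unitary between two models of the minimal unitary dilation; the paper's route yields the dilating triple in explicit multiplication-operator form, which is what is actually exploited later (step $(3)\Rightarrow(4)$ of Theorem \ref{thm:dilation-variety1}). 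Both arguments are complete; just note that your ``standard intertwiner'' is doing real, if routine, work in converting the Schaeffer block-matrix model into the functional model on $L^2(\mathcal D_{P^*})$.
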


\begin{proof}

\textbf{(1) $\Rightarrow$ (2).} Since $P$ is a pure contraction, it follows from Nagy-Foias theory (see \cite{nagy}) that upto a unitary $M_z$ on $L^2(\mathcal D_{P^*})$ is the minimal unitary dilation of $P$. Thus, it follows from Theorem \ref{thm:E-unitary dilation} that
$[G_1,G_2]=0$, $[G_1^*,G_1]=[G_2^*,G_2]$.\\

\noindent \textbf{(2) $\Rightarrow$ (1).} Let us assume (2). Then by Theorem \ref{thm:pure-E-dilation}, $(T_{G_1^*+ zG_{2}},T_{G_{2}^*+zG_{1}},T_z)$ on $H^2(\mathcal D_{P^*})$ is a minimal pure $\E$-isometric dilation of $(A,B,P)$. The $\E$-isometry $(T_{G_1^*+zG_{2}},T_{G_{2}^*+zG_{1}},T_z)$ naturally extends to the $\E$-unitary $(M_{G_1^*+zG_{2}},M_{G_{2}^*+zG_{1}},M_z)$ on $L^2(\mathcal D_{P^*})$. Therefore, $(M_{G_1^*+zG_{2}},M_{G_{2}^*+zG_{1}},M_z)$ on $L^2(\mathcal D_{P^*})$ is an $\E$-unitary dilation of $(A,B,P)$.

\end{proof}

We now present the first interplay between dilation of a pure $\E$-contraction and distinguished varieties in the tetrablock.

\begin{thm} \label{thm:dilation-variety1}

Let $\Upsilon=(A,B,P)$ be a pure $\E$-contraction on $\HS$ with $(G_1,G_2)$ being the $\ft$-tuple of $(A^*,B^*,P^*)$. If $\mathcal D_{P^*}$ is finite dimensional and $\|G_1^*+zG_2 \|_{\infty, \, \T}< 1$, then the following are equivalent.

\begin{enumerate}

\item $(A,B,P)$ possesses an $\E$-unitary dilation $(T_1, T_2,U)$ on $L^2(\mathcal D_{P^*})$, where $U$ is the minimal unitary dilation of $P$.\\

\item $[G_1,G_2]=0$, $[G_1^*,G_1]=[G_2^*,G_2]$. \\

\item The set of polynomials $\{f_1, f_2 \}= \{\det\, (G_1^*+z_3 G_{2}-z_1I),\, \det\, (G_2^*+z_3 G_{1}-z_2I) \}$ defines a distinguished variety $\Omega_\Upsilon$ in $\E$.\\

\item $(A,B,P)$ has a normal $\partial \ov{\Omega}_\Upsilon -$dilation on $L^2(\mathcal D_{P^*})$, where $\Omega_\Upsilon = \mathcal Z(f_1,f_2)\cap \E$ and $\partial \ov{\Omega}_{\Upsilon}=b\ov{\E} \cap \ov{\Omega}_{\Upsilon} $.

\end{enumerate}

\end{thm}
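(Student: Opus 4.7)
The plan is to establish the four equivalences through the cycle
$(1)\Leftrightarrow(2)\Leftrightarrow(3)$ and then $(1)\Rightarrow(4)\Rightarrow(1)$,
leaning on the three ingredients already in place: Proposition \ref{thm:pure-dilation},
the distinguished-variety characterization of Theorem \ref{thm:DVchar-T}, and the
explicit functional model of Theorem \ref{modelthm1}. The equivalence $(1)\Leftrightarrow(2)$
is immediate from Proposition \ref{thm:pure-dilation}, since the purity of $P$ is all that
is used there and the finite-dimensionality of $\mathcal D_{P^*}$ plays no role.

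For $(2)\Leftrightarrow(3)$, I would apply Theorem \ref{thm:DVchar-T} directly to the
commuting pair $G_1,G_2$: these are square matrices acting on the finite-dimensional
space $\mathcal D_{P^*}$, and the hypothesis $\|G_1^*+zG_2\|_{\infty,\T}<1$ supplies
(and is strictly stronger than) the norm condition in that theorem. Condition $(2)$
then supplies the commutator conditions $[G_1,G_2]=0$ and $[G_1^*,G_1]=[G_2^*,G_2]$,
and Theorem \ref{thm:DVchar-T} certifies that
\[
\Omega_\Upsilon=\{(x_1,x_2,x_3)\in\E:(x_1,x_2)\in\sigma_T(G_1^*+x_3 G_2,\;G_2^*+x_3 G_1)\}
\]
is a one-dimensional distinguished variety in $\E$. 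Because $G_1$ and $G_2$ commute and
live in finite dimensions, they can be simultaneously upper-triangularized, so their
Taylor joint spectrum coincides with the set of common eigenvalues on shared
eigenvectors; this identifies $\Omega_\Upsilon$ with $\mathcal Z(f_1,f_2)\cap\E$ where
$f_1,f_2$ are the two determinantal polynomials in the definition. The converse runs
in reverse: if the polynomials $f_1,f_2$ cut out a distinguished variety in $\E$, then
pulling back through Theorem \ref{thm:DVchar-T} (applied to the very matrices $G_1,G_2$
that built those polynomials) forces $(2)$.

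For $(1)\Rightarrow(4)$, I would use the concrete $\E$-unitary dilation produced in the
$(2)\Rightarrow(1)$ half of Proposition \ref{thm:pure-dilation}, namely
$(M_{G_1^*+zG_2},\,M_{G_2^*+zG_1},\,M_z)$ on $L^2(\mathcal D_{P^*})$. All three
operators commute and are normal, being multiplications by matrix-valued functions
whose values are normal matrices at every point of $\T$ (this normality at each $z\in\T$
is exactly the observation exploited in the proof of Theorem \ref{thm:E-unitary dilation}).
The decisive step is to identify the Taylor joint spectrum of this triple. Using the
$L^\infty$ functional calculus for normal multiplication operators on $L^2$ of a
finite-dimensional Hilbert-space-valued space, the joint spectrum is the essential
joint range of the symbol, which here is
\[
\{(x_1,x_2,\zeta):\zeta\in\T,\;(x_1,x_2)\in\sigma_T(G_1^*+\zeta G_2,\,G_2^*+\zeta G_1)\}
=\overline{\Omega}_\Upsilon\cap b\overline{\E}=\partial\overline{\Omega}_\Upsilon.
\]
This exhibits $(M_{G_1^*+zG_2},M_{G_2^*+zG_1},M_z)$ as a normal
$\partial\overline{\Omega}_\Upsilon$-dilation of $(A,B,P)$, proving $(4)$. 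The reverse
implication $(4)\Rightarrow(1)$ is essentially definitional: a commuting normal triple
whose Taylor joint spectrum lies in $\partial\overline{\Omega}_\Upsilon\subseteq b\overline{\E}$
is an $\E$-unitary by Theorem \ref{thm:tu}, and since $(A,B,P)$ is pure and the
dilation space is $L^2(\mathcal D_{P^*})$, the third component is forced to be the
minimal unitary dilation of $P$ via Nagy--Foias theory.

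The main obstacle I foresee is the identification of $\sigma_T$ of the matrix-valued
multiplication operator triple with $\partial\overline{\Omega}_\Upsilon$: one must
argue carefully, using simultaneous triangularization of $G_1,G_2$ together with the
spectral theorem for $M_z$ on $L^2(\T)$, that the essential joint range of
$(G_1^*+zG_2,\,G_2^*+zG_1,\,z)$ computed fibrewise over $\T$ indeed coincides with the
boundary $\partial\overline{\Omega}_\Upsilon$ and not a larger or smaller set. A
secondary subtlety, already flagged above, is the $(3)\Rightarrow(2)$ direction: one
must verify that the commuting matrices recovered from the distinguished-variety
characterization can be taken to be the very $G_1,G_2$ that generated $f_1,f_2$, rather
than some other unitarily equivalent pair, so that the commutator conditions transfer
back to the fundamental operators themselves.
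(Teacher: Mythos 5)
Your architecture matches the paper's: the equivalence $(1)\Leftrightarrow(2)$ via Proposition \ref{thm:pure-dilation}, the passage $(2)\Leftrightarrow(3)$ via Theorem \ref{thm:DVchar-T} with $G_1,G_2$ as the commuting matrices, and the identification of the concrete dilation $(M_{G_1^*+zG_2},M_{G_2^*+zG_1},M_z)$ on $L^2(\mathcal D_{P^*})$ as a normal $\partial\ov{\Omega}_\Upsilon$-dilation. The one place where you have not actually produced an argument is precisely the step that occupies most of the paper's proof: justifying the set equality
\[
\{(x_1,x_2,\zeta):\zeta\in\T,\ (x_1,x_2)\in\sigma_T(G_1^*+\zeta G_2,\,G_2^*+\zeta G_1)\}\subseteq b\ov{\E}\cap\ov{\Omega}_\Upsilon .
\]
You flag this as ``the main obstacle'' but leave it unresolved, and it is not a formality: one must show that a fibrewise joint eigenvalue $(y_1,y_2)$ of $(G_1^*+y_3G_2,\,G_2^*+y_3G_1)$ with $|y_3|=1$ actually produces a point of $\ov{\E}$. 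The paper does this by taking a unit joint eigenvector $\xi$, writing $y_1=\alpha_1+\ov{\alpha}_2y_3$ and $y_2=\alpha_2+\ov{\alpha}_1y_3$ with $\alpha_i=\la G_i^*\xi,\xi\ra$, using normality of $G_1^*+zG_2$ (which converts the numerical-radius bound $\omega(G_1^*+zG_2)\le 1$ into a norm bound) to deduce $|\alpha_1|+|\alpha_2|\le 1$, and then invoking the scalar characterization Theorem \ref{thm:21}. Without this, or without the alternative observation that the triple is already known to be an $\E$-unitary so that $\sigma_T\subseteq b\ov{\E}$ comes for free from the definition (after which only membership in $\mathcal Z(f_1,f_2)$ remains, which follows since fibrewise joint eigenvalues of commuting matrices annihilate the determinantal polynomials), your $(1)\Rightarrow(4)$ is an assertion rather than a proof. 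If you do take the ``$\E$-unitary for free'' shortcut, your route is actually leaner than the paper's, which re-derives the $b\ov{\E}$-containment by hand.

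Two smaller remarks. First, the $(3)\Rightarrow(2)$ subtlety you raise --- that Theorem \ref{thm:DVchar-T} only guarantees \emph{some} commuting pair representing the variety, not necessarily $G_1,G_2$ themselves --- is a genuine issue, but the paper's own proof makes exactly the same unexamined leap in its $(3)\Rightarrow(4)$ step (``by Theorem \ref{thm:DVchar-T}, $G_1,G_2$ satisfy\dots''), so you are not worse off than the source here; you are merely more honest about it. Second, your $(4)\Rightarrow(1)$ is correct and coincides with the paper's: a normal $\partial\ov{\Omega}_\Upsilon$-dilation is an $\E$-unitary dilation because $\partial\ov{\Omega}_\Upsilon\subseteq b\ov{\E}$.
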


\begin{proof}

\textbf{(1) $\Rightarrow$ (2)} Since $P$ is a pure contraction, we have from Nagy-Foias theory (see \cite{nagy}) that upto a unitary $M_z$ on $L^2(\mathcal D_{P^*})$ is the minimal unitary dilation of $P$. Thus, it follows from Lemma \ref{lem:Sau-Bh} and Theorem \ref{thm:E-unitary dilation} that $[G_1,G_2]=0$ and $[G_1^*,G_1]=[G_2^*,G_2]$.\\

\noindent \textbf{(2) $\Rightarrow$ (3)}  Since $\mathcal D_{P^*}$ is finite dimensional, $G_1,G_2$ are commuting matrices. It follows from Theorem \ref{thm:DVchar-T} that the set
\[
\Omega = \{ (x_1,x_2,x_3)\in \E \,:
\; (x_1,x_2) \in \sigma_T(G_1^*+x_3 G_{2}\,,\,
G_2^*+x_3G_1) \}
\]
is a distinguished variety in $\E$ and following the proof of Theorem \ref{thm:DVchar-T} from \cite{S:P-tetra2} we see that $\Omega$ is determined by the set of polynomials
\[
\{f_1, f_2 \}= \{\det\, (G_1^*+z_3 G_{2}-z_1I),\, \det\, (G_2^*+z_3 G_{1}-z_2I) \}.
\]

\noindent \textbf{(3) $\Rightarrow$ (4)} It is obvious that $(x_1,x_2) \in \sigma_T(G_1^*+x_3G_2,G_2^*+x_3G_1)$ if and only if $(x_1,x_2,x_3) \in \sigma_T(G_1^*+x_3G_2,G_2^*+x_3G_1, x_3I)$. If the set of polynomials $\mathcal F=\{f_1, f_2\}$ defines a distinguished variety $\Omega_\Upsilon$ in $\E$, then
\[
\Omega_\Upsilon = V_{\mathcal F} \cap \E = \{ (x_1,x_2,x_3)\in \E \,:\, (x_1,x_2) \in \sigma_T(G_1^*+x_3G_2, \, G_2^*+x_3G_1)\}
\]
and by Theorem \ref{thm:DVchar-T}, $G_1,G_2$ satisfy
$[G_1,G_2]=0$, $[G_1^*,G_1]=[G_2^*,G_2]$ and $\|G_1^*+G_2z \|_{\infty , \, \T} \leq 1$. It follows from Proposition \ref{thm:pure-dilation} that $(A,B,P)$ dilates to the $\E$-unitary $\left( M_{G_1^*+zG_2}, M_{G_2^*+zG_1}, M_z \right)$ on $L^2(\mathcal D_{P^*})$. In order to establish that $(A,B,P)$ possesses a normal $\partial \ov{\Omega}_\Upsilon -$dilation, it suffices if we show that $\sigma_T\left( M_{G_1^*+zG_2}, M_{G_2^*+zG_1}, M_z \right) \subseteq b\ov{\Omega}_\Upsilon$. Since $\mathcal F = \{ f_1,f_2 \}$ defines the distinguished variety $\Omega_{\Upsilon}$ in $\E$, we have that $\partial\ov{\Omega}_\Upsilon = V_{\mathcal F} \cap \partial \ov{\E} \subseteq b \ov{\E}$. Suppose $|y_3|=1$ and $(y_1,y_2,y_3) \in \sigma_T(G_1^*+y_3G_2,G_2^*+y_3G_1, y_3I)$. Suppose $\xi$ is a unit joint eigenvector of $(G_1^*+y_3G_2,G_2^*+y_3G_1, y_3I)$ with respect to the joint eigenvalue $(y_1,y_2,y_3)$. Then
\[
(G_1^*+y_3G_2) \xi = y_1 \xi \quad \& \quad (G_2^*+y_3G_1) \xi = y_2 \xi
\]
and taking inner product with $\xi$ we have that
$y_1=\alpha_1 +\ov{\alpha}_2 y_3$, $y_2= \alpha_2 + \ov{\alpha}_1y_3$, where $\alpha _i = \la G_i^* \xi, \xi \ra$. Since $\left( M_{G_1^*+zG_2}, M_{G_2^*+zG_1}, M_z \right)$ is an $\E$-unitary, both $\| G_1^*+zG_2 \|_{\infty, \T}, \|G_2^*+G_1z \|_{\infty, \T}$ are not greater than $1$. Here $\alpha_1$ and
$\alpha_2$ are unique because we have that $
y_1-\bar{y_2}y_3=\alpha_1(1-|y_3|^2)$ and $
y_2-\bar{y_1}y_3=\alpha_2(1-|y_3|^2) $ which lead to
\[
\alpha_1=\frac{y_1-\bar{y_2}y_3}{1-|x_3y|^2} \text{ and }
\alpha_2=\frac{y_2-\bar{y_1}y_3}{1-|y_3|^2}.
\]
Since $G_1^*+zG_2$ is a normal matrix for every $z \in \T$, we have that $\|G_1^*+zG_2\|=\omega(G_1^*+zG_2)\leq 1$ and thus $\omega(z_1G_1^*+z_2G_2)\leq 1$, for
every $z_1,z_2$ in $\mathbb T$. Therefore,
\[
|z_1\langle G_1^*\xi,\xi \rangle +z_2 \langle G_2\xi,\xi
\rangle| \leq 1, \text{ for every } z_1,z_2 \in \mathbb T.
\]
If both $ \langle G_1^*\xi,\xi \rangle $ and $\langle G_2 \xi,\xi \rangle$ are non-zero, we can choose $z_1=\frac{|\langle
G_1^*\xi,\xi \rangle|}{\langle G_1^*\xi,\xi \rangle}$ and
$z_2=\frac{|\langle G_2 \xi,\xi \rangle|}{\langle G_2 \xi,\xi \rangle}$ to get $|\alpha_1|+|\alpha_2| \leq 1$. If any of them or both
$ \langle G_1^*\xi,\xi \rangle $ and $\langle G_2 \xi,\xi
\rangle$ are zero, then also $|\alpha_1|+|\alpha_2|\leq 1$. Therefore, by Theorem \ref{thm:21}, $(y_1,y_2,y_3)$ is in $\ov{\E}$. Since $|y_3|=1$, we have that $(y_1,y_2,y_3)\in b \ov{\E}$. Also, $(y_1,y_2,y_3)\in V_{\mathcal F}$.  Thus, $(y_1,y_2,y_3)\in b \ov{\E} \cap \ov{\Omega}_\Upsilon = \partial \ov{\Omega}_\Upsilon$. Therefore,
\[
\{ (y_1,y_2,y_3)\in \sigma_T(G_1^*+y_3G_2, G_2^*+y_3G_1, y_3I)\,:\, |y_3|=1  \} \subseteq \partial \ov{\Omega}_\Upsilon.
\]
From here we have that
\[
\sigma_T(G_1^*+zG_2,G_2^*+zG_1,zI)=\sigma_T \left( M_{G_1^*+zG_2}, M_{G_2^*+zG_1}, M_z \right) \subseteq \partial \ov{\Omega}_\Upsilon,
\]
and we are done.\\

\noindent \textbf{(4) $\Rightarrow$ (1)} Note that a normal $\partial \ov{\Omega}_\Upsilon-$dilation is an $\E$-unitary dilation, because, $\partial \ov{\Omega}_\Upsilon \subseteq b\ov{\E}$. Thus, (1) follows trivially from (4).

\end{proof}

The following result is a next step to Theorem \ref{thm:dilation-variety1} in the sense that here we remove the purity condition on $P$ and present a more generalized interplay between dilation and distinguished varieties.

\begin{thm} \label{thm:dilation-variety2}

Let $(A,B,P)$ be an $\E$-contraction on $\HS$ with $\mathcal D_P$ being finite dimensional. Suppose $F_1,F_2$ are the fundamental operators of $(A,B,P)$ satisfying $\| F_1^*+zF_2 \|_{\infty, \, \T}<1$. Then the following are equivalent.

\begin{enumerate}

\item $(A,B,P)$ dilates to an $\E$-unitary $(T_1,T_2,U)$ on $l^2(\mathcal D_{P^*})\oplus \HS \oplus l^2(\mathcal D_P)$, where $U$ is the minimal unitary dilation of $P$.\\

\item $F_1,F_2$ commute, $[F_1^*,F_1]=[F_2^*,F_2]$.\\

\item The set of polynomials $\{f_i=\det\, (F_1+z_3F_2^*-z_iI):\, 1 \leq i \leq n-1 \}$ defines a distinguished variety in $\E$.

\end{enumerate}

\end{thm}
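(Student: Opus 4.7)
The plan is to establish the cycle (1) $\Rightarrow$ (2) $\Rightarrow$ (3) $\Rightarrow$ (1), in each step invoking one of the two main tools already developed in the paper: the dilation criterion Theorem \ref{thm:E-unitary dilation} and the characterization of distinguished varieties Theorem \ref{thm:DVchar-T}. The structure parallels the proof of Theorem \ref{thm:dilation-variety1}, with the role of $(G_1,G_2)$ and the minimal unitary dilation $M_z$ on $L^2(\mathcal D_{P^*})$ now played by $(F_1,F_2)$ and the Sch\"affer minimal unitary dilation on $l^2(\mathcal D_{P^*})\oplus \HS \oplus l^2(\mathcal D_P)$.

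For (1) $\Rightarrow$ (2), I would first observe that $l^2(\mathcal D_{P^*})\oplus \HS \oplus l^2(\mathcal D_P)$ is, up to a unitary reordering of summands, precisely the Sch\"affer space on which the minimal unitary dilation of $P$ is defined (as in the block-matrix presentation of $U$ displayed in the proof of Theorem \ref{thm:E-unitary dilation}). So (1) says exactly that $(A,B,P)$ admits an $\E$-unitary dilation on the minimal unitary dilation space of $P$, and the necessity direction of Theorem \ref{thm:E-unitary dilation} delivers the two operator identities of (2).

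For (2) $\Rightarrow$ (3), finite dimensionality of $\mathcal D_P$ makes $F_1,F_2$ into commuting square matrices of the same size, and the identities in (2) together with the standing hypothesis $\|F_1^*+zF_2\|_{\infty,\T}<1$ are exactly the hypotheses of the forward direction of Theorem \ref{thm:DVchar-T}. Applying that result with $A_1=F_1,A_2=F_2$ yields that
\[
\Omega=\{(x_1,x_2,x_3)\in \E:(x_1,x_2)\in \sigma_T(F_1^*+x_3F_2,\,F_2^*+x_3F_1)\}
\]
is a one-dimensional distinguished variety in $\E$. To identify $\Omega$ with the variety cut out by the polynomials in (3), I would note that under $[F_1,F_2]=0$ and $[F_1^*,F_1]=[F_2^*,F_2]$ the matrices $F_1^*+x_3F_2$ and $F_2^*+x_3F_1$ are commuting normal matrices for every $x_3\in\C$, so their Taylor joint spectrum equals their joint point spectrum. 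Hence $(x_1,x_2)\in \sigma_T(F_1^*+x_3F_2,F_2^*+x_3F_1)$ is equivalent to the simultaneous vanishing of $\det(F_1^*+x_3F_2-x_1I)$ and $\det(F_2^*+x_3F_1-x_2I)$, which is exactly the condition that $(x_1,x_2,x_3)\in\mathcal Z(\mathcal F)$ for the pair $\mathcal F$ of polynomials of (3) (read with the intended symmetric form, matching Theorem \ref{thm:dilation-variety1}).

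For (3) $\Rightarrow$ (1), I would invoke the converse direction of Theorem \ref{thm:DVchar-T}: the existence of the distinguished variety defined by those polynomials forces $(F_1,F_2)$ to be a commuting pair of matrices satisfying $[F_1^*,F_1]=[F_2^*,F_2]$ (the norm condition is part of the hypothesis and is already known to hold strictly). Then the sufficiency direction of Theorem \ref{thm:E-unitary dilation} produces the $\E$-unitary dilation $(T_1,T_2,U)$ on the Sch\"affer space, which is the required (1). I do not expect a genuine obstacle here; the only technical care concerns the translation between the Taylor spectrum condition defining $\Omega$ in Theorem \ref{thm:DVchar-T} and the determinantal description in (3), which rests on the commuting-normal structure of $F_1^*+x_3F_2,\,F_2^*+x_3F_1$ supplied by (2).
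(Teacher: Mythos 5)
Your proposal is correct and follows essentially the same route as the paper: the paper disposes of (1) $\Leftrightarrow$ (2) by Theorem \ref{thm:E-unitary dilation} and of (2) $\Leftrightarrow$ (3) by appealing to (the proof of) Theorem \ref{thm:dilation-variety1}, i.e.\ ultimately to Theorem \ref{thm:DVchar-T}, exactly the two tools you use. Your version merely unrolls these citations into a (1) $\Rightarrow$ (2) $\Rightarrow$ (3) $\Rightarrow$ (1) cycle and supplies the Taylor-spectrum-versus-determinant translation explicitly, which the paper delegates to the proof of Theorem \ref{thm:DVchar-T} in \cite{S:P-tetra2}.
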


\begin{proof}

Evidently \textbf{(1) $\Leftrightarrow$ (2)} follows from Theorem \ref{thm:E-unitary dilation} and \textbf{(2) $\Leftrightarrow$ (3)} follows from Theorem \ref{thm:dilation-variety1}. Note that when we apply Theorem \ref{thm:dilation-variety1} to establish $(2) \Leftrightarrow (3)$, we do not need the purity assumption on $P$ and it is evident from the proof of Theorem \ref{thm:dilation-variety1}.

\end{proof}



\vspace{0.2cm}

\end{document}